\def\PP{\mathcal{P}}
\def\p{\textprime}
\newcommand{\hm}[1]{\leavevmode{\marginpar{\tiny%
$\hbox to 0mm{\hspace*{-0.5mm}$\leftarrow$\hss}%
\vcenter{\vrule depth 0.1mm height 0.1mm width \the\marginparwidth}%
\hbox to 0mm{\hss$\rightarrow$\hspace*{-0.5mm}}$\\\relax\raggedright #1}}}
\begin{document}
\allowdisplaybreaks[2]

\newtheorem{theorem}{Theorem}[section]
\newtheorem{cor}[theorem]{Corollary}
\newtheorem{lemma}[theorem]{Lemma}
\newtheorem{fact}[theorem]{Fact}
\newtheorem{property}[theorem]{Property}
\newtheorem{corollary}[theorem]{Corollary}
\newtheorem{proposition}[theorem]{Proposition}
\newtheorem{claim}[theorem]{Claim}
\newtheorem{conjecture}[theorem]{Conjecture}
\newtheorem{definition}[theorem]{Definition}
\theoremstyle{definition}
\newtheorem{example}[theorem]{Example}
\newtheorem{remark}[theorem]{Remark}
\newcommand\eps{\varepsilon}
\parindent0pt





\title{Canonical theorems for colored integers with respect to some linear combinations}


\author{Maria Axenovich}
\address{Karlsruhe Institute of Technology, Institute of Algebra and Geometry, Englerstra\ss{}e 2, 76131 Karlsruhe, Germany}
\email{maria.aksenovich@kit.edu}

\author{David S. Gunderson}
\address{Department of Mathematics, University of Manitoba, Winnipeg, Manitoba, Canada R3T 2N2}
\email{gunderso@cc.umanitoba.ca}

\author{Hanno Lefmann}
\address{TU Chemnitz, Fakult\"at f\"ur Informatik, Stra\ss{}e der Nationen 62, 09107 Chemnitz, Germany}
\email{lefmann@informatik.tu-chemnitz.de}


\date{\today}

\begin{abstract} \small\baselineskip=9pt 
Hindman proved in 1979 that no matter how natural numbers are colored in $r$ colors, for a fixed positive integer $r$, 
there is an infinite subset $X$ of numbers  and a color $t$  such that for any finite {non-empty} subset $X'$ of $X$,  the color of the  sum of elements  from $X'$ is $t$.
Later, {Taylor} extended this result to colorings with unrestricted number of colors and {five} unavoidable color patterns on finite sums. 
This result is referred to as a {\it canonization} of {Hindman's} theorem and parallels the Canonical Ramsey Theorem {of Erd\H{o}s and Rado}.
We extend   Taylor's result from sums, that are linear combinations with coefficients $1$,  to several linear combinations with  coefficients $1$ and $-1$. 
These results in turn could be interpreted as canonical-type theorems for solutions  to  infinite systems.

\end{abstract}

\maketitle

\markboth{}{}

\thispagestyle{empty}

\section{Introduction}

Ramsey-type questions are concerned with partitions of various discrete structures into parts, often associated with color classes, and finding unavoidable patterns in such partitions.
While classical Ramsey-type results are restricted to partitions into a fixed number of classes and only monochromatic patterns, a so-called {\it canonisation} deals with partitions into arbitrary number of classes and various unavoidable patterns, referred to as {\it canonical situations}.  One of the easiest examples is given for a positive integer $n$  by  coloring a  set  {$(n-1)^2+1$ elements and observing that there is an $n$-element subset that satisfies one of the canonical situations - it is  either monochromatic, i.e., has all elements of the same color or  it is  rainbow, i.e., having all elements of distinct colors.   In this paper we consider colorings of natural numbers into arbitrary number of colors and unavoidable color patterns on certain linear combinations. When linear combinations are  sums of elements, the following classical theorems give us unavoidable patterns in the case of the fixed number of colors and arbitrary number of colors.

{Let ${\mathbb N}$ be the set of positive integers.  Let $\PP'= \PP'({\mathbb N})$ be the set of all finite non-empty subsets of ${\mathbb N}$.} For $r \in {\mathbb N}$ let $[r] = \{1,2, \ldots , r\}$.

\begin{theorem} [Hindman  \cite{hindman}]\label{theo.a}
Let $r$ be a fixed positive integer. Then, for every coloring $\Delta\colon
{\mathbb N} \longrightarrow [r]$ there exist infinitely
many positive
integers $x_1 < x_2 < \cdots$, such that all their finite, non-empty sums (without repetition) $\sum_{i \in I} x_i$ 
are of the same color, $I {\in \PP'}$.
\end{theorem}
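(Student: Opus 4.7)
My plan is to give the Galvin--Glazer proof via idempotent ultrafilters in $\beta\mathbb{N}$, which is substantially cleaner than Hindman's original combinatorial argument. I first identify $\beta\mathbb{N}$ with the space of ultrafilters on $\mathbb{N}$, topologised so that the sets $\widehat{A} := \{p : A \in p\}$ for $A \subseteq \mathbb{N}$ form a clopen basis; this space is compact Hausdorff. Addition on $\mathbb{N}$ extends to $\beta\mathbb{N}$ by declaring
\[
A \in p + q \iff \{n \in \mathbb{N} : A - n \in q\} \in p,
\]
and one verifies that $(\beta\mathbb{N}, +)$ is a compact semigroup in which each right translation $p \mapsto p + q$ is continuous, i.e.\ a compact right-topological semigroup.

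I then invoke Ellis's theorem, a short Zorn's lemma argument, to obtain an idempotent $p = p + p$ in $\beta\mathbb{N}$. Since $\Delta$ partitions $\mathbb{N}$ into $r$ classes and $p$ is an ultrafilter, exactly one class $A := \Delta^{-1}(t)$ belongs to $p$.

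The remaining task is to extract the sequence. Setting $A^\star := \{n \in A : A - n \in p\}$, the idempotence $A \in p + p$ gives $A^\star \in p$, and a short calculation (using $A - n \in p = p + p$) shows that $n \in A^\star$ implies $A^\star - n \in p$. I now inductively pick $x_1 < x_2 < \cdots$: given $x_1 < \cdots < x_{k-1}$ such that
\[
B_{k-1} := A^\star \cap \bigcap_{\emptyset \ne I \subseteq [k-1]} \Bigl(A^\star - \sum_{i \in I} x_i\Bigr) \;\in\; p,
\]
I choose $x_k \in B_{k-1}$ with $x_k > x_{k-1}$; the stability property of $A^\star$ noted above then guarantees $B_k \in p$, sustaining the induction. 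Since $B_{k-1} \subseteq A^\star \subseteq A$, every finite non-empty sum of the chosen $x_i$'s lies in $A$, hence has color $t$.

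The main obstacle is not any single step but the verification of the algebraic-topological properties of $(\beta\mathbb{N}, +)$---well-definedness, associativity, and right-continuity of the extended addition---together with the inductive bookkeeping ensuring $B_{k-1} \in p$ is preserved after each intersection. Both are standard once the ultrafilter framework is in place, and the whole combinatorial content of Hindman's theorem collapses into the existence of an idempotent in a compact right-topological semigroup.
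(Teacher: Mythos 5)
Your argument is the standard Galvin--Glazer ultrafilter proof, and it is correct and complete in all essential respects: with your convention $A \in p+q \iff \{n : A-n \in q\} \in p$ the maps $p \mapsto p+q$ are indeed continuous, so Ellis's theorem applies and yields an idempotent $p=p+p$; the set $A^\star=\{n\in A : A-n\in p\}$ lies in $p$ and satisfies $n\in A^\star \Rightarrow A^\star-n\in p$; and your inductive choice of $x_k\in B_{k-1}$ works because each new translate $A^\star-\bigl(x_k+\sum_{i\in I'}x_i\bigr)$ has its shift taken at an element of $A^\star$, so $B_k$ is again a finite intersection of members of $p$. The only point worth making explicit is why you may choose $x_k>x_{k-1}$: an idempotent ultrafilter is nonprincipal, so every member of $p$ (in particular $B_{k-1}$) is infinite.

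Note, however, that the paper does not prove this statement at all: Theorem 1.1 is quoted as Hindman's theorem with a citation to Hindman's original article, whose proof is a long elementary combinatorial argument, and the paper's own machinery builds on Taylor's canonical refinement rather than reproving the partition version. So your route is genuinely different from the cited source. What the ultrafilter approach buys is brevity and flexibility --- once $(\beta\mathbb{N},+)$ is set up, the combinatorial core reduces to the existence of an idempotent, and the same framework extends smoothly to Milliken--Taylor-type statements relevant to this paper; the cost is reliance on nonconstructive machinery (ultrafilters, Zorn's lemma) in place of the finitary, effective character of Hindman's original argument.
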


\begin{theorem}[Taylor~\cite{taylor}] \label{theo.a.canonical} 
For every coloring $\Delta\colon
{\mathbb N} \longrightarrow {\mathbb N}$ there exist infinitely 
many positive
integers $x_1 < x_2 < \cdots$, such that one of the following holds:
\begin{itemize} 
	\item[(i)] $\Delta (\sum_{i\in I } x_i) = \Delta (\sum_{j\in J } x_j)$ for all  $I, J \in \PP' $, or
	\item[ (ii)] $\Delta (\sum_{i\in I } x_i) = \Delta (\sum_{j\in J } x_j)$ if and only if $I=J$,
for all  $I, J \in \PP'$, or
\item[ (iii)]  $\Delta (\sum_{i\in I } x_i) = \Delta (\sum_{j\in J } x_j)$ if and only if $\mbox{max } I = \mbox{max } J$,
for all $I, J \in \PP'$, or
\item [(iv)] $\Delta (\sum_{i\in I } x_i) = \Delta (\sum_{j\in J } x_j)$ if and only if $\mbox{min } I = \mbox{min } J$, for all $I, J \in \PP'$, or
\item[ (v) ] $\Delta (\sum_{i\in I } x_i) = \Delta (\sum_{j\in J } x_j)$ if and only if  $\mbox{min } I = \mbox{min } J$ and $\mbox{max } I = \mbox{max } J$,
for all  $I, J \in \PP'$.
\end{itemize}
None of these five patterns may be omitted without violating the theorem.
\end {theorem}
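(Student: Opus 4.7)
The plan is to combine Hindman's theorem (Theorem~\ref{theo.a}) with a diagonal refinement argument in the spirit of the Erd\H{o}s--Rado canonical Ramsey theorem. Given an arbitrary coloring $\Delta\colon \mathbb{N}\to\mathbb{N}$, I aim to extract an IP-set $x_1<x_2<\cdots$ on which the induced coloring of $\PP'$ by $I \mapsto \Delta(\sum_{i \in I} x_i)$ has an equivalence relation matching one of the five canonical patterns. The strategy is to iteratively stabilize, via Hindman, finitely many comparison questions per stage, and then classify the resulting ``stable'' configurations.

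The diagonal step goes as follows. Suppose $x_1 < \cdots < x_n$ and an IP-tail $Y_n$ with $\min Y_n > x_1 + \cdots + x_n$ have been chosen, and write $s_I := \sum_{i \in I} x_i$ for $I \subseteq [n]$. For each ordered pair $(I,J)$ with $I, J \subseteq [n]$ I define a finite-valued auxiliary coloring on $\mathrm{FS}(Y_n)$ by colouring each sum $z$ according to the truth value of $\Delta(s_I + z) = \Delta(s_J + z)$, together with the truth value of $\Delta(s_I) = \Delta(s_I + z)$. Only finitely many such binary comparison questions are active at stage $n$, so iterating Theorem~\ref{theo.a} finitely many times produces an IP-subset $Y_{n+1} \subseteq \mathrm{FS}(Y_n)$ on which every comparison has a constant truth value; setting $x_{n+1} = \min Y_{n+1}$ and diagonalising yields a sequence $(x_i)$ with the property that, for any $I, J \in \PP'$, the relation $\Delta(s_I) = \Delta(s_J)$ is stable under adding any further finite sum from the tail.

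Such stability forces the induced equivalence relation $\equiv$ on $\PP'$ to be ``shift-invariant'' in the IP-sense: if $I \equiv J$ and $K \in \PP'$ is disjoint from $I \cup J$, then $(I \cup K) \equiv (J \cup K)$. A direct combinatorial argument classifies the shift-invariant equivalence relations on $\PP'$ consistent with the IP-operation into exactly the five listed possibilities, the point being that the only invariants of $I$ preserved under arbitrary disjoint extensions are $\min I$ and $\max I$, and each can be required or ignored independently, with the ``nothing is ignored'' case corresponding after one further Hindman refinement to the rainbow pattern~(ii). The main obstacle is ruling out spurious dependencies on features such as the cardinality or interior positions of $I$; this is handled by observing that any such dependence would contradict stability when a large disjoint block is adjoined or removed. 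Irredundancy of the five patterns is established separately by exhibiting witnesses: $\Delta \equiv 1$ for (i); $\Delta(n) = n$ for (ii); $\Delta(n) = \nu_2(n)$ (the $2$-adic valuation) for (iv); $\Delta(n) = \lfloor \log_2 n\rfloor$ for (iii); and the pairing $n \mapsto (\nu_2(n),\lfloor \log_2 n\rfloor)$ for (v).
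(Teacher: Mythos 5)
The paper does not prove this statement at all---it is quoted as Taylor's theorem and used as a black box (the paper only proves necessity-type statements for its own variants, via the colorings $\Delta_1,\Delta_2,\Delta_3$ in Section~3, which your witnesses essentially replicate)---so the only question is whether your argument stands on its own, and it has a genuine gap at its core. First, the stability you extract from the diagonal Hindman argument is weaker than what you use. Stabilizing, in $z\in\mathrm{FS}(Y_n)$, the truth values of ``$\Delta(s_I+z)=\Delta(s_J+z)$'' and ``$\Delta(s_I)=\Delta(s_I+z)$'' makes these answers independent of the choice of tail sum $z$, but it does not tie them to the truth value of the $z$-free comparison ``$\Delta(s_I)=\Delta(s_J)$''; it is perfectly consistent with your construction that $I\equiv J$ while $I\cup K\not\equiv J\cup K$ for all tail blocks $K$. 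Moreover your claimed shift-invariance quantifies over all $K$ disjoint from $I\cup J$, including blocks interleaving $I\cup J$ or lying below $\min(I\cup J)$, which the stage-$n$ stabilization never addresses.

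Second, and more seriously, even full shift-invariance does not classify the relation into the five patterns, so the ``direct combinatorial argument'' you invoke does not exist at that level of generality. For example, the relation ``$I\equiv J$ iff $|I|\equiv|J|\pmod 2$'' (realized by coloring $n$ by the parity of its binary digit sum and taking $x_i=2^i$), or ``$I\equiv J$ iff the two smallest elements of $I$ and $J$ coincide'', are shift-invariant but are none of the five. Your proposed repair fails precisely here: adjoining a common disjoint block $K$ to both $I$ and $J$ preserves such spurious agreements (parity of $|I\cup K|$ and $|J\cup K|$ still agree), and ``removing'' a block is not an operation your stability licenses. Killing these dependencies is the actual content of Taylor's theorem, and it requires the condensation step that your framework omits: one must pass to a new sequence whose terms are themselves sums over consecutive blocks of the old one (so that, e.g., a support of size two collapses to size one), and control the induced coloring of finite unions of blocks---this is the Milliken--Taylor-type machinery on $\PP'$ that the known proofs use, and it cannot be replaced by choosing $x_{n+1}$ as a single element of $\mathrm{FS}(Y_{n+1})$ and classifying afterwards. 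Finally, your irredundancy witnesses are the standard (and correct in spirit) ones, but showing that, say, the pair coloring $n\mapsto(\nu_2(n),\lfloor\log_2 n\rfloor)$ defeats patterns (i)--(iv) on \emph{every} infinite sequence is itself a nontrivial argument (compare the effort the paper spends on the analogous claims in its Theorem~3.1), and your proposal gives none of it.
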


{ For any set $X$ we denote the set of all $k$-element subsets of $X$ by 
$\binom{X}{k}$.}
We are extending Theorem \ref{theo.a.canonical} from arbitrary finite sums to finite sums with a given number of summands and to some linear combinations with coefficients equal to $1$ or $-1$. We consider arbitrary colorings of natural numbers.  For an infinite set $X$  of natural numbers we consider special sets of linear combinations:
\begin{enumerate}
\item  $X\cup \{x_1+x_2 + \cdots +x_k:~~ x_1, x_2, \ldots, x_k \in X,~ x_1<x_2<\cdots < x_k \}$ in Section~\ref{x,x1+x2+xk},\\
\item  $X \cup \{ x_k-x_{k-1} + x_{k-2}- x_{k-3}+ \cdots +x_2-x_1: ~ x_1, x_2, \ldots, x_k \in X, ~ x_1<x_2< \cdots < x_k\}$, $k$~even, in Section  \ref{section_even},\\
\item $X \cup \{ x_{k} - x_{k-1} + \cdots + x_{3} - x_{2} + x_{1}: ~ x_1, x_2, \ldots, x_k \in X,   ~ x_{1}< x_{2} < \cdots < x_{k} \}$, $k\geq 3$~odd, in Section~\ref{X-alternating-odd},\\
\item $\{x_1+x_2 + \cdots +x_k: ~ x_1, x_2, \ldots, x_k \in X, ~ x_1<x_2 < \cdots<x_k \}$ in Section~\ref{x1+x2+xk},\\
\item  $\{ x_k-x_{k-1} + x_{k-2}- x_{k-3}+ \cdots +x_2-x_1: ~ x_1, x_2, \ldots, x_k \in X, ~ x_1<x_2< \cdots < x_k\}$, $k$ even, in Section \ref{section_even,ell},\\
\item $ \{ x_{k} - x_{k-1} + \cdots + x_{3} - x_{2} + x_{1}: x_1, x_2, \ldots, x_k \in X, ~  x_{1}< x_{2} < \cdots < x_{k} \}$, $k\geq 3$ odd, in Section ~\ref{alternating-odd}.
\end{enumerate}

For each of these items, we describe  unavoidable canonical situations. I.e., no matter how one colors natural numbers, there is  {an infinite}  subset $X=\{x_1<x_2< \cdots \}$ such that the corresponding  linear combinations satisfy one of the described canonical situations. \\
The existence of canonical situations in items (1) and (4) follow immediately from Theorem \ref{theo.a.canonical}. For the item (1) we show that the number of canonical situations could be reduced if $k=2$, otherwise five necessary situations remain. For item (4) we show that there are three necessary canonical situations. \\

Note that there is a qualitative difference between the first three items we list, that include color conditions on $X$, and the last three items, where the color conditions are only on the linear combinations with the same, say $k$ number of terms.  In fact, the following Ramsey-type result on $k$-tuples provides us with canonical patterns when we define the color of a tuple by the color of the respective linear form.\\

For sets $X= \{x_1 < x_2 < \cdots  < x_k\}$ and $I\subseteq \{1, 2, \ldots , k\}$ let $X:I = \{ x_i:  \, i \in I \}$.
For a set $Y$ and a function $\Delta$ defined on the elements of $Y$ we write  $\Delta(Y) = \{\Delta(y): y\in Y\}$.

\begin{theorem} [Erd\H{o}s and Rado~\cite{ER50}] \label{can_ramsey}
Let $k$ be a positive integer. For every coloring $\Delta \colon {\binom{\mathbb N}{k}} \longrightarrow
{\mathbb N}$ of the set  of $k$-element subsets of
 ${\mathbb N}$
there exists a subset  $I\subseteq \{1, 2,\ldots , k\}$ and an infinite subset $X \subset {\mathbb N}$
such that for all $k$-element sets $Y, Z \in {\binom{X}{k}}$ 
$$
\Delta (Y) = \Delta(Z)\; \;  \mbox{   if and only if    } \; \; Y:I = Z:I.
$$
None of these $2^k$ patterns may be omitted without violating the theorem.
\end{theorem}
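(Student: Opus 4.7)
My plan is to reduce this to the classical (infinite) Ramsey theorem by coarsening $\Delta$ into a finite coloring of $2k$-subsets, extracting a Ramsey-homogeneous infinite set $X_{0}$, and then identifying $I^{*}$ from the pattern of color-collisions on $X_{0}$. Concretely, I color each $A = \{a_{1} < \cdots < a_{2k}\} \in \binom{\mathbb{N}}{2k}$ by the equivalence relation $\sim_{A}$ on $\binom{[2k]}{k}$ defined by $I \sim_{A} J$ iff $\Delta(A:I) = \Delta(A:J)$. Since there are only finitely many equivalence relations on a fixed finite set, Ramsey's theorem yields an infinite $X_{0} \subseteq \mathbb{N}$ on which $\sim_{A}$ coincides with a single fixed relation $\sim^{*}$ for every $A \in \binom{X_{0}}{2k}$. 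Any two $Y, Z \in \binom{X_{0}}{k}$ together use at most $2k$ elements, so whether $\Delta(Y) = \Delta(Z)$ holds is determined entirely by $\sim^{*}$ applied to (the positions of) $Y$ and $Z$ inside an enveloping $2k$-subset.

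To define $I^{*}$, I would perform a coordinate-by-coordinate test on $X_{0}$. For each $i \in [k]$, pick $k+1$ elements $b_{1} < \cdots < b_{k+1}$ of $X_{0}$ and compare the colors of the two $k$-subsets $\{b_{1}, \ldots, b_{i-1}, b_{i}, b_{i+2}, \ldots, b_{k+1}\}$ and $\{b_{1}, \ldots, b_{i-1}, b_{i+1}, b_{i+2}, \ldots, b_{k+1}\}$, which agree at every position except the $i$th. Ramsey-homogeneity makes this comparison independent of the choice of $b_{\bullet}$, so I set $I^{*} = \{i \in [k] : \text{the two colors differ}\}$. The easier direction, $Y:I^{*} = Z:I^{*} \Rightarrow \Delta(Y) = \Delta(Z)$, then follows by chaining $Y$ to $Z$ through intermediate $k$-subsets in $X_{0}$, each differing from its predecessor at a single coordinate outside $I^{*}$; to keep every intermediate tuple strictly increasing I may first reroute through a reference tuple whose coordinates outside $I^{*}$ are very large and then swap them back down.

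The main obstacle is the reverse direction: when $Y:I^{*} \neq Z:I^{*}$, a chain of single-coordinate modifications may contain several swaps inside $I^{*}$, and one must rule out that these swaps ``accidentally cancel'' in the color. I would handle this by comparing $Y$ and $Z$ inside a common ambient $A \in \binom{X_{0}}{2k}$ containing both, identifying them with subsets $A:I$ and $A:J$ for some $I, J \in \binom{[2k]}{k}$, and checking purely combinatorially that $I \sim^{*} J$ forces $I$ and $J$ to agree at every position indexed by $I^{*}$; the check uses transitivity of $\sim^{*}$ together with the Ramsey-shifts that change one coordinate while freezing the others, iterated position by position. Finally, for the sharpness claim that none of the $2^{k}$ patterns may be omitted, for each $I \subseteq [k]$ I would exhibit the coloring $\Delta_{I}(Y) := \langle Y:I \rangle$, encoding the subset $Y:I$ as a natural number via a fixed injection; this coloring satisfies $\Delta_{I}(Y) = \Delta_{I}(Z)$ exactly when $Y:I = Z:I$, and the same equivalence persists on every infinite restriction, so $\Delta_{I}$ realizes precisely the pattern indexed by $I$ and no coarser one.
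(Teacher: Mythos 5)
The paper itself offers no proof of this statement: Theorem~\ref{can_ramsey} is the Erd\H{o}s--Rado canonical Ramsey theorem, quoted from~\cite{ER50}, so there is no internal argument to compare yours against; I can only judge your outline on its own terms. The strategy you describe is the standard one: code each $A\in\binom{\mathbb{N}}{2k}$ by the equivalence relation that $\Delta$ induces on $\binom{[2k]}{k}$, apply Ramsey's theorem for $2k$-sets with finitely many colors to get $X_0$ on which this relation is a fixed $\sim^{*}$, define $I^{*}$ by the one-coordinate swap test (well defined by homogeneity, since every pair of $k$-sets differing in exactly one position realizes the same order pattern), and prove the two implications. The sharpness part is correct and complete, with one small wording issue: you must rule out every other pattern $J\neq I$, not only coarser ones; the two obvious test pairs (sets agreeing except at a position of $I\setminus J$, respectively of $J\setminus I$) do this at once.

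The substantive issue is the reverse direction, which you yourself flag as the main obstacle but then dispatch with ``check purely combinatorially inside a common ambient $A\in\binom{X_0}{2k}$ that $I\sim^{*}J$ forces agreement on $I^{*}$.'' As stated this would fail exactly in the hard case, namely when $Y$ and $Z$ overlap so that the $i$-th element of $Y$ is an element of $Z$ (for $k=2$ and $i\in I^{*}$, think of the pattern $y_1<y_2=z_1<z_2$): there you cannot ``change coordinate $i$ while freezing the others,'' because freezing $Z$ pins that coordinate, and no argument confined to one $2k$-envelope and the abstract relation $\sim^{*}$ resolves it. The missing ingredient, which your phrase ``transitivity of $\sim^{*}$'' points toward but does not execute, is to chain several spread-out instantiations of the same equal-color pattern: since color-equality on $X_0$ depends only on the order pattern of the pair, one may compose the pattern with itself; the map sending $a$ to $b$ whenever the pattern forces $y_a=z_b$ is a partial order-preserving injection of $[k]$ with no nontrivial cycles, so after at most $k$ compositions the $i$-th coordinate of the first set is no longer an element of the last set, and then the one-coordinate shift produces two equal-colored $k$-sets differing only at position $i$, contradicting $i\in I^{*}$. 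Without this (or an equivalent induction on the overlap) your ``accidental cancellation'' worry is not actually dispelled, so the proposal has a genuine gap at its crux, even though the surrounding architecture is the right one. A smaller repair: in the easy direction, rerouting ``through a reference tuple whose coordinates outside $I^{*}$ are very large'' must respect the gaps cut out by the frozen $I^{*}$-coordinates --- each free coordinate can only move within the interval between the neighboring frozen values --- and with that correction the chaining (a connectivity argument for single-swap moves inside each gap) does go through.
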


Thus, for  the  items  (4)-(6) that  we consider one can always provide a set of at most $2^k$ canonical patterns.  Instead of $2^k$, we could find a sets of three, five, and three canonical patterns for items (4), (5), and (6), respectively. \\

Before we present  our results in Sections~\ref{sums}-\ref{alternating-odd}, we describe a connection between the considered problem and solutions to systems of linear algebraic equations with respect to a given partition of natural numbers in Section~\ref{systems}.  The last Section~\ref{conclusions} states conclusions and open problems.

\vskip 1cm

\section{Connection to systems and regular partitions}\label{systems}

Let $A=(a_{i,j})$, $i\geq 1, j\geq 1 $ be an integer-valued   infinite matrix with each row
 containing only finitely many non-zero entries. Call a homogeneous
 system $Ax = A(x_1, x_2, \ldots)^T = 0$
of linear equations {\em partition regular in ${\mathbb N}$}
 if and only if for every positive
 integer $r$ and
any coloring $\Delta\colon {\mathbb N} \longrightarrow [r]$ of 
 ${\mathbb N}$ with $r$ colors there exist positive integers
$y_1, y_2, \ldots$ with $\Delta(y_1) = \Delta (y_2) = \cdots $ such that
 $A(y_1, y_2, \ldots)^T = 0$. Partition regularity for finite matrices is defined similarly.

The theory of finite partition
 regular homogeneous systems of  linear equations has been  studied in particular 
 by Rado~\cite{Rado33} and Deuber~\cite{Deuber73}. \\

 An integer-valued $N \times M$-matrix $A$ has the {\em columns property} if
and only if  the set $\{ 1,2, \ldots , M\}$ of column indices
 can be partitioned as
$\{ 1,2, \ldots , M\} = I_0 \cup I_1 \cup \cdots \cup I_m$
such that (i) the sum of all columns with indices in
 $I_0$ add up to the all-zero vector,
 and  (ii) the sum of all columns with indices 
in $I_j$ is a rational
linear combination of all columns with  indices in $I_0 \cup \cdots \cup
I_{j-1}$,  for $j=1, 2, \ldots,m$.

 \begin{theorem}[Rado  ~\cite{Rado33}]\label{rado_thm}
Let $A$ be a finite integer-valued matrix. The finite system of linear equations  $A (x_1, x_2,\ldots , x_n)^T = 0$
is  partition regular in ${\mathbb N}$ if and only if the matrix $A$
has the columns property.
\end{theorem}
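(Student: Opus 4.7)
My plan is to prove both implications of Rado's theorem separately. For the ``only if'' direction I would argue contrapositively, exhibiting an explicit finite coloring with no monochromatic solution whenever the columns property fails. For the ``if'' direction I would combine a structural linear-algebra step with a Ramsey-type partition theorem for Deuber's $(m,p,c)$-sets.

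For the ``only if'' step, assume $A$ lacks the columns property, let $n$ be the number of columns, and choose a prime $p>n\cdot\max_{i,j}|a_{i,j}|$. For each $y\in{\mathbb N}$ write $y=\sum_{k\ge 0}d_k(y)p^k$ in base $p$, let $\ell(y)$ be the position of the least non-zero digit, and set $\Delta(y):=d_{\ell(y)}(y)\in\{1,\ldots,p-1\}$. Suppose $(y_1,\ldots,y_n)$ is a monochromatic solution with common color $d$. Grouping the indices by the value of $\ell(y_j)$ and reading the $i$-th equation modulo $p^{L+1}$, where $L=\min_j\ell(y_j)$, shows that the set $I$ of indices attaining $L$ satisfies $\sum_{j\in I}a_{i,j}\equiv 0\pmod{p}$ for every row~$i$; the bound on $p$ upgrades this congruence to equality, so the columns indexed by $I$ sum to the zero vector. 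Iterating the argument across the successive levels of $p$ peels off blocks of indices and produces a partition of $\{1,\ldots,n\}$ witnessing the columns property, contradicting the hypothesis.

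For the ``if'' step I would introduce Deuber's $(m,p,c)$-sets: $D(x_0,\ldots,x_m;p,c)$ consists of all integers of the form $cx_i+\sum_{j<i}\lambda_j x_j$ with $i\in\{0,\ldots,m\}$ and $|\lambda_j|\le p$. The argument has two parts. First (linear algebra): if $A$ satisfies the columns property with partition $\{1,\ldots,n\}=I_0\cup\cdots\cup I_m$, then for $m,p,c$ chosen large enough in terms of the entries of $A$, every $(m,p,c)$-set contains a solution of $A\vec{x}=0$. The coordinate $x_j$ with $j\in I_k$ is assigned a value built from the generator $x_k$ and the lower generators via the rational coefficients supplied by (ii) of the columns property, with $c$ absorbing denominators and the conditions $\sum_{j\in I_0}a_{i,j}=0$ ensuring the top-level terms cancel. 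Second (Ramsey): Deuber's theorem asserts that for every $m,p,c,r$ there exist $M,P,C$ such that any $r$-coloring of an $(M,P,C)$-set contains a monochromatic $(m,p,c)$-subset, and ${\mathbb N}$ itself contains $(M,P,C)$-sets for all parameters. Chaining the two parts yields a monochromatic solution to $A\vec{x}=0$ in every finite coloring of ${\mathbb N}$.

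The principal obstacle is Deuber's partition theorem for $(m,p,c)$-sets. I would prove it by a double induction on $m$ and $p$: the base cases reduce to classical statements like Schur's theorem and van der Waerden's theorem, while the inductive step uses an iterated ``product'' argument, choosing the generators $x_0,x_1,\ldots$ one at a time so that, at each stage, a higher-dimensional auxiliary configuration of combinations remains monochromatic after recoloring. Once this Ramsey input is in place, both the columns-property-to-solution construction and the base-$p$ bad coloring are routine, so the whole argument turns on that single combinatorial core.
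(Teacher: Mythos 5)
The paper states this theorem without proof (it is quoted as background, with a citation to Rado), so there is no internal argument to compare against; judged on its own, your outline has a genuine gap in the ``only if'' direction. Reading the equations modulo $p^{L+1}$ does show that the lowest block $I_0$ satisfies $\sum_{j\in I_0}a_{i,j}\equiv 0\pmod p$ for every row $i$, and your bound on $p$ upgrades this to equality. But for the later blocks the analogous computation only gives that the sum of the columns indexed by $I_t$ is congruent modulo $p$ to some $\mathbb{F}_p$-linear combination of the columns in earlier blocks, with coefficients coming from higher base-$p$ digits of the $y_j$; it is not true that ``iterating\ldots peels off blocks'' each of which sums to the zero vector, which is what your text asserts (for Schur's equation $x_1+x_2-x_3=0$ the correct partition is $I_0=\{1,3\}$, $I_1=\{2\}$, and the second block sums to $1\neq 0$). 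Moreover, a mod-$p$ span statement for a single prime with $p>n\max_{i,j}|a_{i,j}|$ does not by itself produce the rational linear combinations required in condition (ii) of the columns property: you either need $p$ larger than a determinant-type (Cramer/Hadamard) bound, so that non-membership in the rational span of the earlier columns is certified by an integer vector whose relevant inner product survives reduction modulo $p$, or you must run the argument for infinitely many primes, apply pigeonhole over the finitely many ordered partitions of the column indices, and then pass from solvability modulo infinitely many primes to rational solvability. As written, this key step of the necessity proof fails.

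For the ``if'' direction your plan is the classical Deuber route (the paper's bibliography cites Deuber's 1973 paper for exactly this machinery): the columns property guarantees that every $(m,p,c)$-set with sufficiently large parameters contains a solution of $A\vec{x}=0$, and Deuber's partition theorem for $(m,p,c)$-sets, together with the fact that $\mathbb{N}$ contains $(M,P,C)$-sets for all parameters, then yields a monochromatic solution. That skeleton is correct, but essentially all of the combinatorial weight rests on Deuber's partition theorem, which you only sketch (``double induction\ldots product argument''); a complete proof requires a careful van der Waerden/Hales--Jewett style iteration. So this half is a reasonable plan rather than a proof, and note that it is in any case a later route than Rado's original 1933 argument, which did not use $(m,p,c)$-sets.
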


For a set $S$ whose elements are colored, call $S$ {\em rainbow} if all its elements are colored distinctly.

For finite systems of linear equations, where colorings of ${\mathbb N}$ may be arbitrary, a canonical version of Rado's theorem was given in~\cite{lef8}.

\begin{theorem} [Lefmann~\cite{lef8}]\label{theo_can_equation}
Let $A$ be an 	 $N \times M$-matrix, that has the columns property with  
corresponding partition $\{ 1, 2,\ldots , M\} = I_0 \cup I_1 \cup \cdots \cup I_m$ of
the set $\{1,2, \ldots , M\}$ of column indices. Then, for every coloring  $\Delta\colon {\mathbb N} \rightarrow {\mathbb N}$
 there exist positive integers
$y_1, y_2, \ldots, y_M$ such that  
 $~A(y_1, y_2,  \ldots, y_M)^T = 0$, and one of the following holds:
 \begin {itemize}
 \item[(i)] the set $\{y_1, y_2,\ldots , y_M\}$ is monochromatic, or
 \item[(ii)] the set $\{y_1, y_2, \ldots , y_M\}$ is rainbow, or
 \item[(iii)] $\Delta (y_i) = \Delta (y_j)$ if and only if $i,j\in I_k$,  for some $k \in \{1, 2,\ldots , m \}$,  for any $1 \leq i,j\leq M$. 
 	 \end{itemize}
 \end{theorem}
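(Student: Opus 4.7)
The plan is to combine the columns-property construction of solutions to $A x = 0$ (as in Rado's theorem, Theorem~\ref{rado_thm}, in its Deuber $(m,p,c)$-set refinement) with the canonical Ramsey theorem of Erd\H{o}s and Rado (Theorem~\ref{can_ramsey}).

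First, I would fix Deuber parameters $(m, p, c)$ depending only on $A$ such that for any generators $s_0 < s_1 < \cdots < s_m$ taken from a sufficiently structured infinite set $S \subseteq \mathbb{N}$ the system $A y = 0$ admits an explicit solution $(y_1, \ldots, y_M)$ with each $y_i = \phi_i(s_0, \ldots, s_m)$ a prescribed $\mathbb{Z}$-linear combination whose leading term is supported on $s_k$ for $i \in I_k$. The exact shape of these $\phi_i$'s is dictated by the columns-property decomposition, and the zero-sum relation $\sum_{i \in I_0} c_i = 0$ leaves extra freedom that lets us realize the $y_i$ with $i \in I_0$ as either all equal or as pairwise distinct at will.

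Second, for each $i \in \{1, \ldots, M\}$ consider the induced coloring $\chi_i \colon \binom{S}{m+1} \longrightarrow \mathbb{N}$ defined by $\chi_i(\{s_0 < \cdots < s_m\}) = \Delta(\phi_i(s_0, \ldots, s_m))$. Applying Theorem~\ref{can_ramsey} successively to each $\chi_i$ and intersecting the resulting infinite refinements, one obtains an infinite $X \subseteq S$ and subsets $J_i \subseteq \{0, 1, \ldots, m\}$ such that $\chi_i(Y) = \chi_i(Z)$ iff $Y : J_i = Z : J_i$ for every $Y, Z \in \binom{X}{m+1}$. A further Ramsey reduction over the finitely many possible cross-coordinate equality patterns $\{(i, j) : \Delta(y_i) = \Delta(y_j)\}$ yields an infinite $X' \subseteq X$ on which this equivalence relation is stable; call it $\sim^{*}$.

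Third, a case analysis on $\sim^{*}$ uses that distinct $(m+1)$-subsets of $X'$ produce distinct parameter choices, hence distinct solutions. Varying an individual generator $s_k$ while fixing the others changes precisely those $\phi_i$ with $J_i \ni k$, and a parameter-swap argument then shows that any equivalence class of $\sim^{*}$ joining two different blocks $I_k$ and $I_l$ must spread to all coordinates, collapsing $\sim^{*}$ into case~(i). Otherwise $\sim^{*}$ respects the block partition, and the zero-sum freedom on $I_0$ is used to force either all $y_i$ ($i \in I_0$) distinct and distinctly colored, giving case~(iii), or to align the entire solution, giving case~(ii). The main obstacle is this last step: ruling out hybrid patterns that partially identify non-initial blocks, which requires coordinating the canonical Ramsey reductions with the parameter freedom afforded by the columns property.
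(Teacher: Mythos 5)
You are trying to reprove a cited result: the paper itself gives no proof of Theorem~\ref{theo_can_equation}, only a pointer to Lefmann~\cite{lef8}, where the argument runs through a canonical partition theorem for Deuber's $(m,p,c)$-sets (each ``row'' of a suitable $(m,p,c)$-set is either monochromatic, or the whole structure is rainbow, or the color depends exactly on the row), into which the columns-property solution of $Ax=0$ is then embedded. Your first two steps are in the same general spirit (Deuber-style parametrization $y_i=\phi_i(s_0,\ldots,s_m)$, then canonical Ramsey on parameter tuples), but the proof stops exactly where the real content begins, and you concede this yourself in the last sentence: ruling out the hybrid stable patterns is not an ``obstacle'' to be coordinated away, it \emph{is} the theorem.

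Concretely: after applying Theorem~\ref{can_ramsey} coordinatewise (note: one must refine sequentially, not ``intersect'' infinite sets) and freezing the cross-coordinate equality pattern $\sim^{*}$ by finite Ramsey, you hold an arbitrary stable pattern, and nothing in your setup forces it to be one of (i), (ii), (iii). The parameter-swap argument you invoke only yields information of the form $J_i=J_{i'}$ and $J_i$ contained in the set of generators on which both $\phi_i$ and $\phi_{i'}$ actually depend; it does not collapse a cross-block identification to global monochromaticity. Worse, pattern~(iii) demands that each block $I_k$, $k\geq 1$, be internally monochromatic while the $I_0$-coordinates are pairwise distinctly colored; within a block the $\phi_i$ are \emph{different} elements of the same row (same leading generator, different lower-order corrections --- a constant-per-block assignment is not a solution, since $\sum_{i\in I_k}c_i\neq 0$ for $k\geq 1$), and your reductions give no handle to make their colors agree, nor to make the $I_0$-values distinct \emph{and} distinctly colored; the claim that the zero-sum relation lets you do this ``at will'' is unsupported. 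With a single fixed family on exactly $m+1$ generators, the frozen $\sim^{*}$ may for every tuple from $X'$ split some block into two colors while identifying two coordinates from different blocks, and then no member of your family realizes any of the three patterns --- at which point the proposal has no fallback. Lefmann escapes this by proving the canonical statement for the entire $(m,p,c)$-structure first (with a parameter space much richer than $m+1$ generators, using canonical van der Waerden/Erd\H{o}s--Rado type arguments inside rows), and only then reading off the solution; that structural step is missing from your proposal and cannot be replaced by coordinatewise canonical Ramsey plus a frozen cross-pattern.
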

 
  If a matrix $A$ does not have the columns property, nothing is really known concerning the canonical situations. 

 Note that Theorem~\ref{theo_can_equation} provides three canonical situations~(i), (ii), and~(iii).  Moreover, one can find different sets of  canonical situations in this case. 
If a set of canonical situations does not contain (iii),  there must be at least four canonical situations  as noted in  \cite{lef8}. In this setting, one can talk about the number of canonical situations, since a canonical situation here is simply a partition of the set $\{y_1, y_2, \ldots , y_M\}$.  In some other results presented in this paper,   classes of unavoidable color patterns are not necessarily described in terms of partitions, thus we avoid quantifying the number of canonical situations.

 The proof of Theorem~\ref{theo_can_equation} in~\cite{lef8} proceeds roughly like this: one can describe a solution by pattern~(i), (ii) or (iii), and there is a coloring, that shows that patterns~(i) and (ii) do not suffice to describe the canonical situation. So there might be other descriptions of the patterns. Such a proof strategy will be used in this paper.

The  case for infinite partition regular systems was addressed  in~\cite{ghl}.

\begin{theorem}[Gunderson, Hindman and Lefmann~\cite{ghl}] \label{ghl}  Let $k$ be  a positive integer and let  $(a_1, a_2,\ldots , a_k)$ be 
 a sequence of non-zero integers, *where for each $i=1, 2,\ldots , k$ there is some $\alpha_i \in {\mathbb N} \cup \{0\}$ such that $a_i = c^{\alpha_i}$ or  $a_i = -c^{\alpha_i}$ for the same $c \in {\mathbb N} \setminus \{1 \}$*.
The infinite system of linear equations
\begin{eqnarray} \label{gen_system_**}
\langle 
a_1  x_{j_{1}} + a_2  x_{j_{2}}  +\cdots + a_k    x_{j_{k}} = x_{j_{1}, j_{2}, \cdots, 
j_{k}}; ~ 
1 \leq j_{1} < j_2 < \cdots < j_{k} \rangle
\end{eqnarray}
is partition regular in ${\mathbb N}$  {if and only if} either
(i) $a_k = 1$ and $a_1 + a_2 + \cdots + a_k =0$, or (ii)  $a_1 + a_2 + \cdots + a_k = 1$, or (iii)
$a_1 = a_2 = \cdots = a_k = 1$.
\end{theorem}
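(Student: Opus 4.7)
The plan is to prove both directions of the equivalence, with the main effort in the sufficiency of case~(i) and in designing the right coloring for necessity.

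\smallskip

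For sufficiency, I would handle the three cases in increasing order of difficulty. Case~(iii) is immediate: given a finite coloring of $\mathbb{N}$, invoke Hindman's Theorem~\ref{theo.a} to extract $X = \{x_1 < x_2 < \cdots\}$ whose finite non-empty sums are monochromatic, and set $y_i := x_i$ and $y_{j_1,\ldots,j_k} := x_{j_1} + \cdots + x_{j_k}$. Case~(ii) is trivial: the constant assignment $y_i := 1$, $y_{j_1,\ldots,j_k} := 1$ is monochromatic and satisfies every equation precisely because $\sum_i a_i = 1$. Case~(i) is the heart of the sufficiency; since $\sum_i a_i = 0$, constants produce $0$, so I would start from a Hindman set $X$ and define each $y_n$ as a finite $X$-sum $\sum_{i \in F_n} x_i$, choosing the index sets $F_n$ so that the cancellation $\sum_i a_i = 0$ combined with the base-$c$ structure $a_i = \pm c^{\alpha_i}$ collapses every combination $\sum_i a_i y_{j_i}$ to another finite non-empty $X$-sum, hence to the same color by Hindman. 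For $k = 2$ with $(a_1, a_2) = (-1, 1)$ the partial sums $y_n := x_1 + \cdots + x_n$ already work because differences telescope; for the general case one needs a block construction aligned with the exponents $\alpha_i$, or, more robustly, an idempotent ultrafilter in $(\beta\mathbb{N}, +)$ together with a Central Sets Theorem argument.

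\smallskip

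For necessity I would prove the contrapositive: assuming none of (i)--(iii) holds, construct a finite coloring of $\mathbb{N}$ admitting no monochromatic solution. The natural choice exploits base $c$, since multiplication by $a_i = \pm c^{\alpha_i}$ is a signed shift of the base-$c$ expansion. I would color each $n$ by the position modulo $P$ of its last non-zero base-$c$ digit, with $P$ chosen larger than $\max_i \alpha_i$. On a putative monochromatic sequence $y_1, y_2, \ldots$, the identity $\sum_i a_i y_{j_i} = y_{j_1,\ldots,j_k}$ translates into a rigid constraint on the signs and exponents, and a case analysis according to which $c$-adic valuation dominates the left-hand side forces one of (i)--(iii).

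\smallskip

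The main obstacle is sufficiency in case~(i) as soon as $k \geq 3$ or some $\alpha_i > 0$. Already for $c = 2$ and $(a_1, a_2, a_3) = (-2, 1, 1)$, the naive partial-sum construction produces a coefficient $2$ on some $x_i$ inside $-2 y_{j_1} + y_{j_2} + y_{j_3}$, and repeated sums of this form are not controlled by Hindman's theorem. Overcoming this requires either a delicate combinatorial construction of the $F_n$'s tuned to the specific sign-exponent pattern, or the stronger ultrafilter-theoretic machinery that directly yields IP-systems closed under the prescribed linear forms. The necessity direction is technically nontrivial but essentially a single careful base-$c$ calculation, and not, I expect, the conceptual bottleneck.
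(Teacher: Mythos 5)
First, note that the paper does not prove Theorem~\ref{ghl} at all: it is quoted from~\cite{ghl} without proof, so there is no internal argument to compare against and your proposal must stand on its own. The easy parts do stand: case~(iii) via Hindman with $y_{j_1,\ldots,j_k}=x_{j_1}+\cdots+x_{j_k}$, and case~(ii) via the constant solution $y\equiv 1$, which the notion of partition regularity used here indeed permits (the paper makes exactly this remark right after the theorem). But both load-bearing steps are missing. For sufficiency in case~(i) you correctly identify the obstacle (already for $(-2,1,1)$, $c=2$) and then only list candidate tools --- ``a block construction aligned with the exponents'' or ``an idempotent ultrafilter together with a Central Sets Theorem argument'' --- without carrying either out. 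Any correct argument must moreover use the starred hypothesis $a_i=\pm c^{\alpha_i}$ in an essential way: the paper records that without it the statement is only a conjecture of~\cite{ghl}, so a generic IP/central-sets argument that never sees the base-$c$ structure cannot be complete as described. The base-$c$ block construction is precisely the content of the hard direction, not a detail to be filled in later.

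The necessity direction is also underestimated, and the specific coloring you propose fails. Take $(a_1,a_2)=(-2,1)$ with $c=2$: none of (i)--(iii) holds (the sum is $-1$), so a correct proof must produce a finite coloring admitting no monochromatic solution of $\langle -2x_{j_1}+x_{j_2}=x_{j_1,j_2}\rangle$. If ``position of the last non-zero base-$c$ digit'' means the low-order end (the $2$-adic valuation mod $P$), choose all $y_i$ odd; then every $y_{j_2}-2y_{j_1}$ is odd as well, so the whole solution has valuation $0$ and is monochromatic. If it means the leading-digit position, choose $y_i=2^{m_iP}+2^{m_iP-1}+1$ with $m_1<m_2<\cdots$ growing rapidly; then each $y_{j_2}-2y_{j_1}$ stays in the same block $[2^{m_{j_2}P},2^{m_{j_2}P+1})$ as $y_{j_2}$, so again everything is monochromatic --- and since these $y_i$ are also odd, this single choice defeats even the product of the two colorings. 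So your coloring does not settle the case $k=2$, and the genuine necessity argument must combine scale and digit/congruence information in a more intricate way than ``a single careful base-$c$ calculation.'' As it stands, the proposal is a sensible plan whose two essential components --- the case~(i) construction and the non-partition-regularity colorings --- are absent.
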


{We remark that in~\cite{ghl} it is conjectured that Theorem~\ref{ghl} holds even  if we omit the condition within the stars * there.}

 In Theorem~\ref{ghl} in cases~(i) and~(iii) the $x_i$'s can be assured to be pairwise distinct. However, in~(ii) for arbitrary sequences  $(a_1, a_2, \ldots , a_k)$ with $a_1 + a_2 + \cdots + a_k = 1$  all $x_i$'s are allowed to be the same. It is not known, for which such sequences  one can achieve  the $x_i$'s to be pairwise distinct. Clearly, for this to hold  $a_k $ must be positive.

There are sequences  $(a_1, a_2, \ldots , a_k)$ with $a_1 + a_2 + \cdots + a_k = 1$ and $a_k > 0$, where 
one cannot achieve that the $x_i$'s are pairwise distinct.
 One such sequence for $k=2$ is $(-1,2)$. Namely, consider the coloring $\Delta \colon {\mathbb N} \longrightarrow \{ 0,1,2,3,4 \}$, {where $\Delta(1) = 0$ and for $x \geq 2$} $\Delta(x) \equiv i \bmod 5$ for $ x \in \left[ \left\lceil \sqrt{2}^i\right\rceil,  \left\lceil \sqrt{2}^{i+1}\right\rceil - 1\right]$. For any infinite sequence $x_1 < x_2 < \cdots$, consider 
 $-x_1 + 2x_\ell$, $\ell \geq 2$, where $x_\ell$ is sufficiently large compared to $x_1$. Then, if $x_\ell \in  \left[ \left\lceil \sqrt{2}^i\right\rceil,  \left\lceil \sqrt{2}^{i+1}\right\rceil - 1\right]$ we have 
 $-x_1 + 2x_\ell \in \left[ \left\lceil \sqrt{2}^{i+1}\right\rceil,  \left\lceil \sqrt{2}^{i+4}\right\rceil - 1\right]$, thus $\Delta(x_\ell) \neq \Delta(-x_1 + 2x_\ell)$.\\
  A similar argument works for any sequence  $(a_1, a_2,\ldots , a_k)$ with $a_1 + a_2 + \cdots + a_k = 1$ and $a_k \geq 2$.
 However, as a consequence of our considerations concerning the canonical situation for the system~(\ref{gen_system_**}), for  alternating sequences of $-1$'s and $+1$'s, whose sums are equal to $1$, we show later, see  Corollary~\ref{cor-1,+1}, that one can achieve the $x_i$'s to be pairwise distinct.\\

Milliken~\cite{milliken} and Taylor~\cite{taylor} independently extended Hindman's Theorem~\ref{theo.a} to colorings of $k$-element sets of sets. 
Note that Milliken-Taylor type results imply the corresponding results for systems of equations. 
Indeed,  for an infinite set  $X=\{x_1<x_2<\cdots \}\subseteq {\mathbb N}$, consider  integer coefficients  $a_1, a_2, \ldots, a_k$ and  a set 
 $\mathcal{F}(X){=} X\cup \{ \sum_{i=1}^k a_ix_{j_i}: ~ x_{j_1}, x_{j_2}, \ldots , x_{j_k}\in X,~ 1 \leq  j_1< j_2 < \cdots < j_k\}$ of linear combinations. Assume we know that for any coloring of natural numbers there is an infinite 
set $X=\{x_1<x_2<\cdots \}$ such that $\mathcal{F}(X)$ satisfies one of patterns from  $P$ for a set of canonical patterns $P$.
Then, it implies that for any coloring of the natural numbers there is a solution to the infinite system 
$\langle \sum_{i=1}^k a_ix_{j_i} = x_{j_1,j_2, \ldots, j_k}:~ 1\leq j_1<j_2<\cdots <j_k\rangle$ of equations satisfying one of the color patterns from $P$. 
However, these two formulations are not equivalent since a solution to the system allows for the values of the variables  $x_{j_i}$'s to be repeated,  that is not the case in the first formulation involving the family $\mathcal{F}(X)$ of linear forms.\\

In this paper we focus on the first, stronger formulation in terms of linear forms and consider several specific situations with $a_i\in \{1, -1\}$, $i=1, 2, \ldots, k$.

\vskip 1cm


\section{The set $X\cup \{x_1+x_2 + \cdots +x_k:~~ x_1, x_2, \ldots, x_k \in X,~ x_1<x_2<\cdots < x_k \}$}\label{x,x1+x2+xk} \label{sums}


The next result is an analogue of Taylor's theorem~\ref{theo.a.canonical} for $k$-term sums with $k \geq 2$ fixed.

\begin{theorem} \label{theo*.aa}
Let $k\geq 2$ be an integer. Let $\Delta \colon {\mathbb N} \longrightarrow {\mathbb N}$ be an arbitrary 
coloring.
Then, one can find an infinite set $X= \{ x_1 < x_2 < \cdots\}$,
such that the sets $X$ and  
$X_{sum}= X_{sum}(k)=   \{ (\sum_{j\in J } x_j):~ J \in \binom{\mathbb N}{k} \}$  are  colored according to one of the following
patterns:
\begin{itemize}
\item[(i)] $X \cup X_{sum}$ is monochromatic, or 
\item[(ii)]  $X \cup X_{sum}$ is rainbow, or 
\item[(iii)] $X$ is rainbow and $\Delta(\sum_{j\in J } x_j)= \Delta(x_{\max J})$ for all $J\in \binom{\mathbb N}{k}$, or 
\item[(iv)]  $X$ is rainbow and $\Delta(\sum_{j\in J } x_j)= \Delta(x_{\min J})$ for all $J\in \binom{\mathbb N}{k}$,  or
 \item[(v)]   $X$ is rainbow, and  $\Delta(\sum_{j\in J } x_j)= \Delta (\sum_{j\in I} x_j)$ if and only if $\min I=\min J$ and $\max I=\max J$,  for all $I,J\in \binom{\mathbb N}{k}$, and $\Delta(X) \cap \Delta(X_{sum})= \emptyset$.
\end{itemize}
Moreover, patterns~(ii) and~(v) coincide for $k=2$ and none of the patterns may be omitted without violating the theorem.
\end{theorem}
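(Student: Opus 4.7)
The plan is to derive the positive direction of the theorem as a direct consequence of Taylor's Theorem~\ref{theo.a.canonical}, and then to establish sharpness by producing five explicit colorings, each realizable only via the corresponding pattern.

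I would first apply Taylor's theorem to the given $\Delta\colon {\mathbb N}\longrightarrow{\mathbb N}$, producing an infinite set $X=\{x_1<x_2<\cdots\}$ that realizes one of the five Taylor patterns on the full family $\{\sum_{i\in I}x_i : I\in\PP'\}$. Restricting this canonical situation to singletons (which encode the coloring of $X$ itself) and to sets in $\binom{\mathbb N}{k}$ (which encode the coloring of $X_{sum}$) gives a case-by-case translation to our five patterns. Taylor's monochromatic case yields~(i); Taylor's rainbow case yields~(ii). If the color depends on $\max I$, then distinct singletons have distinct maxima so $X$ is rainbow, and for every $J\in\binom{\mathbb N}{k}$ we have $\Delta(\sum_{j\in J}x_j)=\Delta(x_{\max J})$ by comparing with the singleton $\{\max J\}$; this is pattern~(iii). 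The $\min$-controlled case is symmetric and gives~(iv). In the $(\min,\max)$-controlled case, for $k\geq 2$ every singleton has $\min=\max$ while every $k$-sum has $\min<\max$, so their $(\min,\max)$-pairs can never coincide, whence $\Delta(X)\cap\Delta(X_{sum})=\emptyset$ and pattern~(v) holds.

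The coincidence of (ii) and (v) for $k=2$ is immediate: a two-element subset is determined by its minimum and maximum, so the equivalence $(\min I,\max I)=(\min J,\max J)$ in~(v) becomes $I=J$; combined with $\Delta(X)\cap\Delta(X_{sum})=\emptyset$ and the fact that distinct $(\min,\max)$-pairs receive distinct colors, this exactly says that $X\cup X_{sum}$ is rainbow, which is~(ii).

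For the sharpness, I would re-use the same colorings that witness the sharpness of Theorem~\ref{theo.a.canonical}: a constant coloring for~(i); any injection, e.g.\ $\Delta(n)=n$, for~(ii); the coloring $\Delta(n)=\lfloor\log_2 n\rfloor$ for~(iii), since on any infinite set one can pass to a subsequence growing so rapidly that every $k$-sum lies in the same dyadic block as its maximum summand; a symmetric coloring based on the $2$-adic valuation or on trailing dyadic blocks for~(iv); and a pair-valued coloring recording the least and greatest positions of the nonzero binary digits of $n$ for~(v). The main technical obstacle is verifying, for each of~(iii),~(iv) and~(v), that no infinite $X\subseteq{\mathbb N}$ can realize any of the other four patterns under the stated coloring. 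This reduces to a short analysis exploiting that on a sufficiently spread subsequence the sum $\sum_{j\in J}x_j$ inherits its relevant min- or max-structure from its extreme summands, so any attempt to witness an excluded pattern runs into a direct contradiction with the defining property of the coloring.
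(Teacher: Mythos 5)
Your positive direction and the $k=2$ coincidence are fine and essentially identical to the paper's: the paper also just invokes Taylor's Theorem~\ref{theo.a.canonical} and reads off the five patterns (singletons give the color of $X$, $k$-sets give $X_{sum}$, and for $k\geq 2$ a singleton never shares its $(\min,\max)$-pair with a $k$-set, which yields the disjointness in~(v)). Likewise your witness for~(iii) (dyadic blocks, after thinning to a rapidly growing subsequence, using that a pattern holding for $X$ holds for every infinite subsequence) is a sound variant of the paper's block coloring, which uses blocks $[k^i,k^{i+1}-1]$ only to avoid the thinning step.

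The genuine gap is in your witnesses for the necessity of~(iv) and~(v). You propose the $2$-adic valuation for~(iv) and the pair $\bigl(v_2(n),\lfloor\log_2 n\rfloor\bigr)$ (least and greatest nonzero binary digit positions) for~(v). These fail for odd $k\geq 3$. For~(iv) with $k=3$ and $\Delta=v_2$, take $X$ to be any infinite set of odd numbers: every $3$-term sum is again odd, so $X\cup X_{sum}$ is monochromatic in color $0$ and pattern~(i) is realized, hence this coloring does not force~(iv). For~(v) with $k=3$ and the pair coloring, take $x_i=2^{a_i}+1$ with $a_i$ growing fast: each $3$-sum is odd (so its $v_2$-coordinate agrees with every element's) and lies in the same dyadic block as its largest summand, so $\Delta(\sum_{j\in J}x_j)=\Delta(x_{\max J})$ for all $J$ and pattern~(iii) is realized, hence~(v) is not forced. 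The point is that a sum of $k$ terms with equal $2$-adic valuation and equal odd part residue changes valuation only when $k$ is even. The paper avoids exactly this by working in base $k$: it uses $\Delta_2(x)=x'$ where $x=k^{x'}x''$ with $k\nmid x''$, and thins $X$ so that all $x_i''$ lie in a fixed residue class $d+ck \bmod k^2$ with $1\leq d<k$; then a sum of $k$ such terms has $k$-adic valuation strictly larger than $x'$ (indeed exactly $x'+1$ in the controlled case), which kills~(i) for the~(iv)-witness and kills~(iii) and pins down $\Delta_2$ on the relevant pairs of sums for the~(v)-witness $\Delta_3=(\Delta_1,\Delta_2)$. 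To repair your argument you would need to replace the base-$2$ constructions by base-$k$ ones (or otherwise control residues modulo a modulus tailored to $k$), and also make explicit the residue-class thinning that guarantees two sums with equal block color also share the valuation coordinate; as written, your sketch "a short analysis on a sufficiently spread subsequence" does not go through for odd $k$.
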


\begin{proof}
The sufficiency follows from Taylor's Theorem~\ref{theo.a.canonical}. Let $k \geq {2}$. Next it will be shown that none of these five patterns may be omitted without violating the theorem.
Clearly, patterns~(i) and~(ii) must be there by considering a monochromatic and a rainbow coloring of the positive integers.\\

We shall show that none of the pattern~(iii)-(v) may be omitted by considering two colorings  $\Delta_1$ and $\Delta_2$  of ${\mathbb N}$:\\

$\Delta_1 (x) = i$ if and only if $x \in I_i= [k^i, k^{i+1}-1]$, $i=0,1, \ldots$, and \\
$\Delta_2 (x) = x'$, where $x= k^{x'} x''$, and $x' \in {\mathbb N} \cup \{0\}$ and $x''\in {\mathbb N}$ is not divisible by $k$. \\

Note that one of the patterns (i)-(v) is satisfied for an infinite sequence $x_1<x_2< \cdots$ if and only if this pattern is satisfied on each infinite subsequence of $x_1<x_2<\cdots$. 
We shall consider an arbitrary infinite sequence $x_1<x_2< \cdots$ of positive integers and, by taking subsequences (later we also refer to this as {\it thinning}) assume that $x_i$ belongs to  $[k^{k_i}, k^{k_i+1}-1]$, where $k_1<k_2<\cdots$.
In particular we can assume that $\Delta_1$ is rainbow on $x_1, x_2, \ldots$.
Furthermore, we see that there is either an infinite set of $x_i$'s that have the same value of $\Delta_2(x_i)$ or an infinite set of $x_i$'s such that $\Delta_2(x_i)$'s are pairwise distinct.
Considering infinite subsequences again, we can assume again that one of the two cases  holds:\\

Case~(a): $\Delta_2$ is monochromatic on $x_1, x_2, \ldots $, or \\
Case~(b): $\Delta_2$ is rainbow on $x_1 < x_2 < \ldots $, moreover $\Delta_2(x_1)<\Delta_2(x_2) <\cdots$.\\

Moreover, by taking subsequences  we can assume that there are integer constants  ${c,d}$, with $1\leq d <k$ and $0\leq c <k$,   such that if $x_i= k^{x_i'}x_i''$,  for non-negative integers $x_i', x_i''$  and $x_i'$ largest possible, then  $x_i'' \equiv d + ck  \bmod {k^2}$ for all $i=1, 2, \ldots$. \\

{\bf Claim 1.}  Pattern~(iii) can not be omitted.\\
Suppose that one can omit pattern~(iii). Consider the coloring $\Delta_1$. Since it is rainbow,  it does not have pattern~(i). 
Take  $x_{3k}$ and assume that $\Delta_1(x_{3k})=m$. 
Consider the set $Y= \{\sum_{i\in I} x_i + x_{3k}:   I= \{j, j+1\ldots, k+j-1\}, ~ j=1,2,3\}$.
Then for any $y\in Y$, $y \in [k^m+1, k^{m+2}-1]$, thus $\Delta_1(y) \in \{m, m+1\}$. Since $|Y|=3$, some two elements of $Y$ have the same color under $\Delta_1$.  In particular, patterns~(ii) and~(v)  are  not satisfied. 
 Moreover, we do not have pattern~(iv), as { $\Delta_1( \sum_{i=1}^k x_i ) \neq  \Delta_1(\sum_{i=1}^{k-1} x_i + x_{3k})$}.
  Thus, the only remaining pattern is (iii).\\

{\bf Claim 2. } Pattern~(iv) can not be omitted.\\
Consider the coloring $\Delta_2$. 
If case~(a) holds, then {for some non-negative integer $x'$,}  $\Delta_2(x_i)=x'$, for all $i$.  Thus $x_i= k^{x'}(d+kx_i'')$,  $i=1, 2,\ldots$. Then  $\sum_{i=1}^k x_i = k^{x'}[(d+kx_1'') + (d+kx_2'')+ \cdots + (d+ kx_k'')]= k^xq $, {for positive integers $x, q$, } where $x> x'$.  Thus $\Delta_2(x_{1}+x_2 + \cdots + x_k) \neq \Delta_2(x_1)$ and  pattern~(i) is not satisfied.
Since in  case~(a)  $\Delta_2$ is monochromatic on $x_1, x_2, \ldots $, none of patterns~(ii)-(v) hold. 
Thus we can assume that case~(b) holds, i.e., that $\Delta_2$ is rainbow on $x_1, x_2, \ldots $. {Thus pattern~(i) does not occur. 
Consider $\sum_{i=1}^k x_i = \sum_{i=1}^k k^{x_i'}(d+kx_i'')$, where $x_1'<x_2'<\cdots <x_k'$. Then 
$\sum_{i=1}^k x_i = k^{x_1'} (d + kx_1'' + \sum_{i=2}^k k^{x_i'-x_1'}(d+kx_i''))$, so $\Delta_2(\sum_{i=1}^k x_i)= x_1' = \Delta_2(x_1)$.
Thus, } patterns~(ii), (iii), and (v)  cannot occur because  $\Delta_2(x_k)\neq \Delta_2(x_1) = \Delta_2 ( \sum_{i=1}^k x_i)$.\\

{\bf Claim 3.} Pattern~(v) can not be omitted.\\
Note that if $k=2$, patterns~(v) and~(ii) coincide, and we already showed that pattern~(ii) can not be omitted. So, assume that $k\geq 3$.
Consider the coloring $\Delta_3$ of ${\mathbb N}$ defined by $\Delta_3(x)= (\Delta_1(x), \Delta_2(x))$. 
Since $\Delta_1$ and $\Delta_2$ do not satisfy pattern (i), so does not $\Delta_3$. Pattern~(iv) does not occur as {$\Delta_1(x_1) \neq \Delta_1( \sum_{i=1}^{k-1} x_i + x_{2k})$}.  {It remains to verify that patterns (ii) and (iii) do not occur.}\\

 Assume that case~(a) holds,  {i.e.,  $\Delta_2(x_i)=x'$ and $x_i= k^{x'}(d+kx_i'')$, for all $i$.}  {Consider 
  $\sum_{i=1}^k x_i = k^{x'}\sum_{i=1}^k (d+ kx_i'') = k^{x'} (dk + \sum_{i=1}^k kx_i'')  = k^{x'+1} ( d + \sum_{i=1}^k x_i'') $, then in particular $\Delta_2(\sum_{i=1}^k x_i) >x' =\Delta_2(x_k)$,}  and pattern ~(iii)   does not occur.
 From an argument used in Claim~1 we see that there are two sets 
   $I = \{j_1, j_1+1, \ldots, k+j_1 - 1\}$ and $I' = \{j_2, j_2+1, \ldots, k+j_2 - 1\}$,  $j_1 \neq j_2$,  such that  for  $y= \sum_{i\in I}x_i + x_{3k}$ and $y'=\sum_{i\in I'}x_i + x_{3k}$, we have  $\Delta_1(y)= \Delta_1(y')$. Next we shall show that  $\Delta_2(y)= \Delta_2(y')$. 
  Indeed, we know that $x_i = k^{x'}( d + ck + \ell_i k^2)$, for some non-negative integers $\ell_i$'s, for all $i$.
  Then $y = k^{x'} ( dk + ck^2 + {k^2}\sum_{i\in I \cup \{3k\}} \ell_i )$ and  $y' = k^{x'} ( dk + ck^2 + {k^2} \sum_{i\in I' \cup \{3k\}} \ell_i) $. Thus $\Delta_2(y)=\Delta_2(y')=x'+1$. This implies that $\Delta_3(y)= 
  (\Delta_1(y), \Delta_2(y))=   (\Delta_1(y'), \Delta_2(y'))=\Delta_3(y')$. 
Thus pattern~(ii) does not occur.\\

Assume that case~(b) holds.  Note that in this case  $\Delta_2(x_1) = \Delta_2 (x_1 + \sum_{i\in I} x_i)$ for any $I \in \binom{\{2, 3, \ldots \}}{k-1}$, and, since $k \geq 3$,    for at least two of  sets $I, I' \in \binom{\{2, \ldots, 2k-1\}}{k-2}$ the colors $\Delta_1(x_1 + (\sum_{i \in I} x_i) + x_{2k})$ and $\Delta_1(x_1 + (\sum_{i \in I'} x_i) + x_{2k})$ are the same.   Thus pattern~(ii)  does not occur.  Pattern~(iii) does not occur  as $\Delta_2(x_k) \neq  \Delta_2 (\sum_{i=1}^k x_i)$.

 Thus, none of the five patterns~(i)--(v) may be omitted without violating the theorem.
\end{proof}

\vskip 1cm

\section{The set $X \cup \{ x_k-x_{k-1} + x_{k-2}- x_{k-3}+ \cdots +x_2-x_1: ~ x_1, x_2, \ldots, x_k \in X, ~ x_1<x_2< \cdots < x_k\}$, $k$~even} \label{section_even}

 For finite, non-empty sets $A, B \in \PP'$ denote
$A < B$ if $\mbox{max } A < \mbox{min B}$.  For an infinite family ${\mathcal A} =\{ A_1 < A_2 < \cdots \}$
of finite subsets of ${\mathbb N}$ let
$
{\mathcal A}' = \{ \cup_{i \in I} A_i:  \; I \in \PP'
\}$,
hence ${\mathcal A}'$ is the 
 family of all finite, non-empty unions of  sets from the family 
${\mathcal A}$.

 Positive integers can be identified with
finite subsets of ${\mathbb N}\cup \{0\} $ by using the mapping
$f \colon \PP'
\longrightarrow  {\mathbb N}$ with $f(S) = \sum_{s \in S} 2^s$,
 where $S \in \PP' $.

(v\p)
Taylor's Theorem~\ref{theo.a.canonical} in terms of finite sets reads as follows.

\begin{theorem}[Taylor~\cite{taylor}] \label{theo.a.canonicaln} 
Let  ${\mathcal A} =\{ A_1 < A_2 < \cdots \}$ be  an infinite family
of finite non-empty subsets of ${\mathbb N}$.
For every coloring $\Delta'\colon
{\mathcal A}' \longrightarrow {\mathbb N}$ there exist infinitely 
many finite sets $B_1 < B_2 < \cdots$ in ${\mathcal A}'$,
 such that one of the following holds:
\begin{itemize} 
	\item[(i)] $\Delta' (\cup_{i\in I } B_i) = \Delta' (\cup_{j\in J } B_j)$ for all  $I, J \in \PP' $, or
	\item[ (ii)] $\Delta' (\cup_{i\in I } B_i) = \Delta' (\sum_{j\in J } B_j)$ if and only if $I=J$,
for all  $I, J \in \PP'$, or
\item[ (iii)]  $\Delta' (\cup_{i\in I } B_i) = \Delta' (\cup_{j\in J } B_j)$ if and only if $\mbox{max } I = \mbox{max } J$,
for all $I, J \in \PP'$, or
\item [(iv)] $\Delta' (\cup_{i\in I } B_i) = \Delta' (\cup_{j\in J } B_j)$ if and only if $\mbox{min } I = \mbox{min } J$, for all $I, J \in \PP'$, or
\item[ (v) ] $\Delta' (\cup_{i\in I } B_i) = \Delta' (\cup_{j\in J } B_j)$ if and only if  $\mbox{min } I = \mbox{min } J$ and $\mbox{max } I = \mbox{max } J$,
for all  $I, J \in \PP'$.
\end{itemize}
None of these five patterns may be omitted without violating the theorem.
\end {theorem}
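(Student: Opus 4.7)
The plan is to derive Theorem~\ref{theo.a.canonicaln} from Taylor's original Theorem~\ref{theo.a.canonical} via the set-to-integer bijection $f(S) = \sum_{s \in S} 2^s$. Since the hypothesis $A_1 < A_2 < \cdots$ forces the $A_i$'s to be pairwise disjoint, the binary supports of $a_i := f(A_i)$ are disjoint, and so for every $I \in \PP'$,
\[
f\Big(\bigcup_{i \in I} A_i\Big) = \sum_{i \in I} f(A_i) = \sum_{i \in I} a_i,
\]
and all these integer sums are pairwise distinct. Thus $f$ translates the finite-union structure on $\mathcal{A}'$ into the finite-sum structure on the IP-system $\{\sum_{i \in I} a_i : I \in \PP'\} \subseteq \mathbb{N}$.

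Given the coloring $\Delta'$, I would extend it to a coloring $\Delta : \mathbb{N} \to \mathbb{N}$ by setting $\Delta\big(\sum_{i \in I} a_i\big) := \Delta'\big(\bigcup_{i \in I} A_i\big)$ for each $I \in \PP'$ and assigning arbitrary colors outside this IP-system. Then I would invoke Taylor's Theorem~\ref{theo.a.canonical} in its standard ``restricted'' form: rather than over all of $\mathbb{N}$, one works inside the IP-system generated by $(a_i)$, obtaining indices $I_1 < I_2 < \cdots$ in $\PP'$ such that the integers $x_j := \sum_{i \in I_j} a_i$ realize one of the five patterns under $\Delta$. Setting $B_j := \bigcup_{i \in I_j} A_i$ gives sets $B_1 < B_2 < \cdots$ in $\mathcal{A}'$, and because $f$ transports unions to sums, each of the five patterns for $\Delta$ on sums $\sum_{j \in J} x_j$ transfers literally to the corresponding pattern for $\Delta'$ on unions $\bigcup_{j \in J} B_j$.

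For the clause that none of the five patterns may be omitted, I would reverse the reduction by taking the canonical family $\mathcal{A} = \{\{i\}: i \in \mathbb{N}\}$; here $f$ becomes a bijection $\mathcal{A}' \to \mathbb{N}$, and for any coloring $\Delta$ of $\mathbb{N}$ witnessing the indispensability of a given pattern in Theorem~\ref{theo.a.canonical}, the pull-back $\Delta' := \Delta \circ f$ witnesses the indispensability of the same pattern for Theorem~\ref{theo.a.canonicaln}. The main technical subtlety I foresee is the ``restricted Taylor'' step above: this is not literally the statement of Theorem~\ref{theo.a.canonical}, but it is immediate from the ultrafilter proof (by working with an idempotent in the closure of the IP-set) or can be obtained by iteratively thinning $(a_i)$ and repeatedly applying Theorem~\ref{theo.a.canonical}. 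The remainder of the argument is pure bookkeeping through the bijection $f$.
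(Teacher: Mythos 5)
The paper does not prove this statement at all: it is Taylor's theorem in its original finite-sets form, quoted from~\cite{taylor}, and the integer statement, Theorem~\ref{theo.a.canonical}, is the \emph{corollary} one gets from it via $f(S)=\sum_{s\in S}2^s$ (this is exactly the direction in which the paper uses Theorem~\ref{theo.a.canonicaln} in the proofs of Theorems~\ref{prop-1,+1} and~\ref{prop+1,-1}). Your proposal runs the reduction in the opposite direction, and the entire weight then rests on the step you yourself flag: you need Taylor's theorem \emph{localized} to the IP-system generated by $a_i=f(A_i)$, because Theorem~\ref{theo.a.canonical} as stated produces integers $x_1<x_2<\cdots$ anywhere in ${\mathbb N}$, with no guarantee that they lie among the finite sums of the $a_i$, and a coloring defined arbitrarily off that IP-system tells you nothing there. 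Your second suggested fix, ``iteratively thinning $(a_i)$ and repeatedly applying Theorem~\ref{theo.a.canonical}'', does not close this gap: thinning the generators does not change the fact that each application of the theorem returns witnesses with no localization whatsoever, and the restricted statement you need is essentially Theorem~\ref{theo.a.canonicaln} itself in disguise, so as written the argument is circular at its key step. Your first fix (an idempotent ultrafilter in the closure of the IP-set) is a legitimate route, but it is an independent proof of the stronger canonical statement, not a derivation from Theorem~\ref{theo.a.canonical}, and it is far from ``immediate''; if you invoke it, you may as well cite Taylor directly, as the authors do.

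There is, however, an elementary bridge if you insist on deducing the set version from the integer version. First reduce general ${\mathcal A}$ to the case of singletons by recoloring index sets: put $\Theta(I):=\Delta'(\cup_{i\in I}A_i)$ for $I\in\PP'$, and map a solution $D_1<D_2<\cdots$ for $\Theta$ back via $B_j:=\cup_{i\in D_j}A_i$; all five patterns transfer verbatim. For the singleton case, apply Theorem~\ref{theo.a.canonical} once to $\Delta(n):=\Theta(\mathrm{supp}_2(n))$ (extended arbitrarily where the support is not admissible), and from the resulting $x_1<x_2<\cdots$ extract block sums $y_j=\sum_{i\in E_j}x_i$ with $E_1<E_2<\cdots$ whose binary supports are pairwise disjoint and lie above one another; this is possible because among the partial sums of any tail of the $x_i$'s two are congruent modulo $2^{m+1}$, so some block sum is divisible by $2^{m+1}$. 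Since $\min$ and $\max$ of $\cup_{j\in J}E_j$ are determined by $\min J$ and $\max J$, each of the five patterns passes from the $x$'s to the $y$'s, and the supports $D_j:=\mathrm{supp}_2(y_j)$ then realize the same pattern for $\Theta$, hence for $\Delta'$. Your treatment of the ``none of the five patterns may be omitted'' clause by pulling back the integer witnesses through $f$ is sound (and in fact works for arbitrary ${\mathcal A}$, since $f$ sends unions in ${\mathcal A}'$ to sums along an increasing sequence), but without an argument of the above kind the core existence step of your proposal has a genuine gap.
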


Here we use the following notation for a $k$-element set $J=\{j_1<j_2< \cdots <j_k\}$ and positive integers $x_j$, $j\in J$:
$$\sum^*_{j\in J} x_j =   x_{j_k} - x_{j_{k-1}} +x_{j_{k-2}} - x_{j_{k-3}} + \cdots + x_{j_2} - x_{j_1}.$$
We refer to this as  an \emph{alternating sum}.

\begin{theorem} \label{prop-1,+1}
Let $k\geq 2$ be an even integer. Let $\Delta \colon {\mathbb N} \longrightarrow {\mathbb N}$ be an arbitrary 
coloring. 
Then, one can find an infinite set $X=\{x_1 < x_2 < \cdots\}$, such that the sets
$X$ and $X_{alt}^*= X_{alt}^*(k)=\{  \sum^*_{j\in J}x_j :  J\in \binom{\mathbb{N}}{k}\}$ are  colored according to one of the following patterns:
\begin{itemize}
\item[(i)]  $X \cup X_{alt}^*$ is  monochromatic, or
\item[(ii)]  $X \cup X_{alt}^* $  is rainbow, or
\item[(iii)]   $X$  is rainbow  and  $\Delta(\sum^*_{j\in J } x_j)= \Delta(x_{\max J})$ for all $J\in \binom{\mathbb N}{k}$, or 
\item[(iv)]  $X$ is monochromatic, and
 $\Delta(X) \cap \Delta(X_{alt}^*) = \emptyset$, and  
$\Delta( \sum^*_{j\in J} x_j)= \Delta( \sum^*_{j\in I} x_j)$ if and only if $\min I= \min J$, for all  $I,J\in \binom{\mathbb{N}}{k}$,  or
 \item[(v)]   $X$ is rainbow, $\Delta(X)\cap  \Delta(X_{alt}) = \emptyset$,  and  $\Delta(\sum^*_{j\in J } x_j)= \Delta (\sum^*_{j\in I} x_j)$ if and only if $\min I=\min J$ and $\max I=\max J$,  for all $I,J\in \binom{\mathbb N}{k}$.
\end{itemize}
For $k \geq 4$ none of the five patterns can be omitted without violating the theorem. For $k=2$ patterns~(ii) and~(v) coincide. In the latter case, none of the four patterns~(i)-(iv) can be omitted without violating the theorem.
\end{theorem}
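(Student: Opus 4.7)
The plan is to follow the template of Theorem~\ref{theo*.aa}: establish sufficiency by combining Taylor's Theorem~\ref{theo.a.canonicaln} for finite sums with canonical Erd\H{o}s--Rado (Theorem~\ref{can_ramsey}), and establish necessity by exhibiting five explicit colorings. The key observation that makes the even-$k$ case tractable is the identity
\[
\sum_{j\in J}^{*}x_j \;=\; \sum_{\ell =1}^{k/2}\bigl(x_{j_{2\ell}}-x_{j_{2\ell -1}}\bigr),
\]
valid for every $J=\{j_1<\cdots <j_k\}$ with $k$ even. Thus the alternating sum is a positive sum of $k/2$ differences of consecutive pairs of elements of $J$, and Taylor-type results can be invoked on the differences.

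For sufficiency I would proceed in three steps. First, apply Theorem~\ref{theo.a.canonicaln} to $\Delta$ to extract an infinite sequence $z_1<z_2<\cdots$ on which $\Delta$ restricted to finite non-empty sums of $z_i$'s exhibits one of Taylor's five patterns. Next, consider the derived sequence $u_i:=z_{2i}-z_{2i-1}$ and apply Theorem~\ref{theo.a.canonicaln} once more to $\Delta$ on the $u_i$'s; finite sums of the selected $u_i$'s are exactly the alternating sums of the \emph{pair-aligned} $k$-subsets of the resulting set $X=\{z_{2i-1},z_{2i}:i\in\mathbb{N}\}$. Finally, to handle non-pair-aligned $k$-subsets of $X$, apply canonical Erd\H{o}s--Rado (Theorem~\ref{can_ramsey}) to the induced coloring $J\mapsto\Delta(\sum^{*}_{j\in J}x_j)$ on $\binom{\mathbb{N}}{k}$, yielding a canonical pattern $I\subseteq [k]$, and in parallel apply the trivial canonical Ramsey for singletons to $\Delta|_{X}$ to make $\Delta$ either monochromatic or rainbow on $X$. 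A further thinning step, in which $X$ is made sparse enough that $\sum^{*}_{J}$ is dominated by the extremal contributions $x_{j_k}-x_{j_{k-1}}$ and $x_{j_2}-x_{j_1}$, collapses the $2^k$ Ramsey possibilities to the five sets $\emptyset,\{1\},\{k\},\{1,k\},[k]$; a final cleanup removes at most one element per color class to enforce the disjointness $\Delta(X)\cap\Delta(X_{alt}^{*})=\emptyset$ required in patterns~(iv) and~(v).

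For necessity I would adapt the colorings $\Delta_1,\Delta_2$ used in the proof of Theorem~\ref{theo*.aa}. The constant coloring and the identity coloring show that patterns~(i) and~(ii) cannot be omitted. An interval-coloring of the form $\Delta_1(x)=i$ for $x\in[k^{i},k^{i+1})$ rules out omitting pattern~(iii), because for widely spaced $x_i$'s the alternating sum lies in the same interval as its largest summand. A coloring based on the $k$-adic valuation rules out pattern~(iv), and a product of the two rules out pattern~(v). For $k=2$ the condition ``$\min I=\min J$ and $\max I=\max J$'' reduces to ``$I=J$'' since both are two-element sets, making patterns~(ii) and~(v) coincide; necessity of each of (i)--(iv) is then verified using the same four colorings restricted to the $k=2$ setting.

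The main obstacle I anticipate is the thinning step that collapses the canonical Ramsey pattern $I\subseteq [k]$ to one of the five listed subsets and simultaneously enforces the color-disjointness conditions in~(iv) and~(v). In contrast to Theorem~\ref{theo*.aa}, where Taylor's theorem already supplies exactly five patterns for finite sums, in the alternating-sum setting patterns involving ``middle'' indices of $J$ can a priori persist under arbitrary thinning. Ruling them out requires a careful use of the freedom to choose $X$ sparse together with the fact that in an alternating sum only the first and last indices stabilize the color under arbitrary replacements of the middle entries; carrying out this reduction cleanly is the core technical difficulty.
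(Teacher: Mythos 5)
The central gap is in your sufficiency argument. Your plan is Taylor on the pair differences $u_i=z_{2i}-z_{2i-1}$ plus canonical Erd\H{o}s--Rado on the induced coloring of $\binom{\mathbb N}{k}$, followed by a thinning step that "collapses the $2^k$ patterns to five". This cannot work as described: the Erd\H{o}s--Rado canonical set $I\subseteq[k]$ is invariant under passing to infinite subsets (the iff-condition persists and the canonical set is unique on an infinite set), so no amount of sparseness can turn, say, $I=\{2\}$ into one of $\emptyset,\{1\},\{k\},\{1,k\},[k]$; excluding middle-index patterns needs an argument specific to alternating sums, which is exactly the part you leave open. Moreover, Erd\H{o}s--Rado only compares colors of alternating sums with one another; it can never deliver the cross-conditions in the statement, namely $\Delta(\sum^*_{j\in J}x_j)=\Delta(x_{\max J})$ in (iii) and $\Delta(X)\cap\Delta(X_{alt}^*)=\emptyset$ in (iv) and (v), and your "remove one element per color class" cleanup cannot create the latter when $X$ is monochromatic and some alternating sum shares its color. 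The paper avoids all of this with a single application of Taylor's theorem in the finite-union form (Theorem~\ref{theo.a.canonicaln}): color finite sets by $\Delta'(S)=\Delta(\sum_{i\in S}2^i)$, extract blocks $B_1<B_2<\cdots$, and set $x_j=\sum_{t\in B_1\cup\cdots\cup B_j}2^t$. Because the blocks have disjoint binary supports, every alternating sum telescopes, $\sum^*_{j\in J}x_j=\sum_{t\in B_{J_{alt}}}2^t$ with $J_{alt}$ the union of the consecutive index blocks $\{j_1+1,\ldots,j_2\},\ldots,\{j_{k-1}+1,\ldots,j_k\}$, so all elements of $X$ and all alternating sums (pair-aligned or not) are governed simultaneously by the five Taylor patterns for $\Delta'$, and the translation yields the cross-conditions for free (for instance $\min J_{alt}=j_1+1\geq 2$, while every $x_j$ corresponds to a union whose minimal block index is $1$, which is exactly where the disjointness in (iv) and (v) comes from). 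Without this nested-partial-sum device, or a genuinely new argument excluding the middle-index Erd\H{o}s--Rado patterns and producing the cross-conditions, your sufficiency proof does not go through.

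The necessity part is also underestimated. The colorings from Theorem~\ref{theo*.aa} do not transfer directly: for alternating sums the relevant quantity is the valuation of differences $x_j-x_i$, which is not controlled by the valuations of $x_i$ and $x_j$. The paper instead uses $\Delta_2(x)=(x',x''\bmod p)$ for an odd prime $p$ (valuation together with the residue of the cofactor) and first proves Lemma~\ref{lemma,2,3,4}, a Ramsey-type preparation showing one may assume that $(x_j-x_i)'$ depends only on $i$ and increases with $i$, and that all $(x_j-x_i)''$ lie in one nonzero residue class mod $p$; only with this structure do the case checks of Lemma~\ref{unavoidable-i-v} (that $\Delta_1$ excludes patterns (i),(ii),(iv),(v), that $\Delta_2$ excludes (i),(ii),(iii),(v), and that $\Delta_3=(\Delta_1,\Delta_2)$ excludes (i)--(iv) for $k\geq 4$) go through. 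Finally, your heuristic that "the alternating sum lies in the same interval as its largest summand" is false in general, since $x_{j_k}-x_{j_{k-1}}$ can be small; the paper's $\Delta_1$ argument instead pigeonholes several alternating sums sharing the same maximum index into two dyadic intervals, rather than comparing an alternating sum with $x_{\max J}$.
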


In view of the remarks directly below Theorem~\ref{ghl} we have the following consequence of Theorem~\ref{prop-1,+1}.

\begin{corollary} \label{cor-1,+1}
Let $k,r\geq 2$ be integers, where $k$ is even. Let $\Delta \colon {\mathbb N} \longrightarrow [r]$ be a
coloring. 
Then, one can find an infinite set X= \{$x_1 < x_2 < \cdots\} \subset {\mathbb N}$, such that the set
$X \cup X_{alt}^*$ is  monochromatic.
\end{corollary}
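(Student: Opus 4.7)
The plan is to obtain Corollary~\ref{cor-1,+1} as an immediate consequence of Theorem~\ref{prop-1,+1}, using the finiteness of the palette $[r]$ to exclude every non-monochromatic canonical pattern.

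First, I would apply Theorem~\ref{prop-1,+1} to the coloring $\Delta\colon \mathbb{N}\to[r]$, viewed as a coloring into $\mathbb{N}$ via the inclusion $[r]\subset\mathbb{N}$. This yields an infinite set $X=\{x_{1}<x_{2}<\cdots\}$ on which $X \cup X_{alt}^{*}(k)$ realises one of the five canonical patterns~(i)--(v).

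Next, I would discard the four non-monochromatic patterns. Patterns~(ii), (iii), and~(v) each require $X$ itself to be rainbow, i.e.\ $\Delta$ takes infinitely many distinct values on the infinite set $X$, which contradicts the hypothesis that $\Delta$ uses only $r$ colors. Pattern~(iv) requires that $\Delta(\sum^{*}_{j\in I}x_{j})=\Delta(\sum^{*}_{j\in J}x_{j})$ precisely when $\min I=\min J$, so distinct minima produce distinct colors on $X_{alt}^{*}$; since $\min J$ assumes infinitely many values as $J$ ranges over $\binom{\mathbb{N}}{k}$, infinitely many colors would again be required, contradicting $r<\infty$. Therefore only pattern~(i) can occur, which states exactly that $X\cup X_{alt}^{*}$ is monochromatic, as desired.

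I do not anticipate any technical obstacle, since the entire content is packaged into Theorem~\ref{prop-1,+1} and the exclusion of patterns~(ii)--(v) is a routine pigeonhole step. The only minor point to verify is that in pattern~(iv) the biconditional on $\min J$ genuinely forces infinitely many colors on the alternating sums, which is immediate from the statement of~(iv) applied to any infinite set of admissible minima.
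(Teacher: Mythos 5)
Your deduction is correct and matches what the paper intends: it states Corollary~\ref{cor-1,+1} as an immediate consequence of Theorem~\ref{prop-1,+1}, leaving implicit exactly the pigeonhole step you spell out (patterns~(ii), (iii), (v) need $X$ rainbow, and pattern~(iv) forces infinitely many colors on $X_{alt}^*$ via distinct minima, so only the monochromatic pattern survives for a coloring with $r$ colors). No gaps; your argument is essentially the paper's own route, just written out in full.
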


{Note that  for the case of $k=2$, Corollary~\ref{cor-1,+1} has been shown by Rado~(\cite{Rado69}) by using Ramsey's theorem for pairs~\cite{ramsey}, i.e.,
the infinite system  $\langle x_j - x_i = x_{i,j} ; \; 1 \leq i < j  \rangle$ of linear equations is partition regular in ${\mathbb N}$.}
In our arguments we will use the following lemma.\\

Let $p\geq 3$ be a prime. For a positive integer $x$, we let $x'$ and $x''$ denote non-negative integers such that $x= p^{x'}x''$, where $x''$ is not divisible by $p$.
When the numbers are indexed, we shall also use the notation $x_{j,i}'$ for $(x_j-x_i)'$ and  $x_{j,i}''$ for $(x_j-x_i)''$, when $x_j>x_i$.

\begin{lemma}\label{lemma,2,3,4}
Let $X= \{ x_1 < x_2 < \cdots \} \subseteq {\mathbb N}$ be an infinite set. \\
There exists  an infinite set $Z = \{ z_1< z_2 < \cdots\} \subset X$, such that each of the following holds

\begin{itemize}
\item $(z_j-z_i)'' \equiv y'' \bmod p$ for some $y'' \in \{1,2,\ldots, p-1\}$, for all $i < j$, 
\item  $(z_j-z_i)' = (z_k -z_\ell)'$    if and only if $i =  \ell$, for $i<j$ and $\ell< k$, 
\item $(z_{ i+1} - z_i)' < (z_{i+2} -z_{i+1})'$ for all $i$'s.
\end{itemize}

\end{lemma}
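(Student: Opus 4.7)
The plan is to realize $X$ inside the ring $\mathbb{Z}_p$ of $p$-adic integers and to exploit compactness in order to extract an accumulation point $\alpha \in \mathbb{Z}_p$ of $X$; equivalently, iterated infinite pigeonhole produces nested infinite subsets $X = X^{(0)} \supseteq X^{(1)} \supseteq X^{(2)} \supseteq \cdots$ in which all members of $X^{(N)}$ share a common residue modulo $p^N$, and these residues assemble to a $p$-adic integer $\alpha$. The guiding observation is that once the chosen elements $z_k$ satisfy $v_p(z_k-\alpha)$ strictly increasing, the ultrametric identity forces $v_p(z_j-z_i) = v_p(z_i-\alpha)$ for all $i<j$, which already encodes the second and third bullets of the lemma.

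First I would construct $z_1 < z_2 < \cdots$ greedily. Having selected $z_1 < \cdots < z_{k-1}$, set $a_{k-1} := v_p(z_{k-1}-\alpha)$ and pick $z_k \in X$ with $z_k > z_{k-1}$ and $v_p(z_k - \alpha) > a_{k-1}$; infinitely many such candidates exist because $\alpha$ is a $p$-adic accumulation point of $X$ (equivalently, $X^{(a_{k-1}+1)}$ is infinite). Writing $a_k := v_p(z_k-\alpha)$ produces a strictly increasing sequence, and the ultrametric identity gives $v_p(z_j - z_i) = a_i$ for every $i < j$. This immediately yields the second bullet (dependence only on the smaller index, and injectivity there by strict monotonicity of $a_\bullet$) together with the third bullet.

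To secure the first bullet, let $u_i := (z_i - \alpha)/p^{a_i} \in \mathbb{Z}_p^{\times}$. The identity
\[
z_j - z_i \;=\; p^{a_i}\bigl(p^{a_j - a_i}\,u_j - u_i\bigr),
\]
combined with $a_j > a_i$, shows that the integer $(z_j - z_i)/p^{a_i}$ -- which, since $v_p(z_j-z_i)=a_i$, coincides with the combinatorial unit part $(z_j - z_i)''$ -- satisfies $(z_j - z_i)'' \equiv -u_i \pmod{p}$. Applying infinite pigeonhole to the residues $u_i \bmod p \in \{1, \ldots, p-1\}$, I thin to an infinite subsequence on which $u_i \bmod p$ is constant; this gives the first bullet with $y''$ equal to the common value of $-u_i \bmod p$, while passing to a subsequence clearly preserves the other two bullets. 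The only point requiring care is the identification of the combinatorial $(\cdot)''$ with the $p$-adic unit part on a difference whose valuation has already been pinned down, which is immediate from $v_p = (\cdot)'$ on that difference.
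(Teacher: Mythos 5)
Your argument is correct, but it takes a genuinely different route from the paper's. The paper proceeds purely combinatorially: it colors pairs by $x_{j,i}''\bmod p$ and triples by comparing $x_{j,i}'$ with $x_{k,i}'$, applies Ramsey's theorem for pairs and for triples, rules out two of the three triple-colors (one via the nonexistence of an infinite decreasing sequence of valuations, the other via a mod-$p$ contradiction that uses $p$ odd), and finally shows that no $p+1$ of the resulting values $y_i'$ can coincide, so that after thinning they are distinct and increasing. You instead embed $X$ in $\mathbb{Z}_p$, extract a $p$-adic accumulation point $\alpha$, and choose $z_k$ greedily with $v_p(z_k-\alpha)$ strictly increasing; the ultrametric identity then pins down $v_p(z_j-z_i)=v_p(z_i-\alpha)$ and $(z_j-z_i)''\equiv -u_i \bmod p$ at once, so the second and third bullets come for free and the first needs only a single infinite pigeonhole on $u_i\bmod p$. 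Your identification of the combinatorial unit part with the $p$-adic one is justified exactly because the valuation of the difference has been computed first, and thinning at the end clearly preserves the other two bullets. What your approach buys: no Ramsey theorem, no case analysis over the triple-coloring, and no use of the hypothesis that $p$ is odd (the paper's exclusion of the third color needs $2y''\not\equiv y''\bmod p$), so your proof works verbatim for $p=2$ as well. What the paper's approach buys is that it stays entirely within elementary finite combinatorics of the integers, in the same Ramsey-theoretic toolkit used elsewhere in the paper. One small point worth a sentence in a final write-up: in the greedy step you should note that at most one element of $X$ can equal $\alpha$, so you can always pick $z_k\neq\alpha$ and keep $v_p(z_k-\alpha)$ finite; this is trivial but should be said.
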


Note that if we think of $(z_j-z_i)'$ as a color of an edge $\{i,j\}$ of an infinite complete graph, the above lemma claims that this graph is a unions of monochromatic stars $S_j$, $j=1,\ldots$,  with centers at $i$ and leaves at $j$, $j>i$, such that the color of edges in  $S_i$ is different from the color of  edges in $S_j$ for any $i\neq j$.

\begin{proof} 
We define  two  colorings 
$\Delta_4 \colon \binom{X}{2} \longrightarrow \{1,2, \ldots, p-1\}$ and 
  ~$\Delta_5 \colon \binom{X}{3} \longrightarrow \{0,1,2\}$:
\begin{eqnarray*}
\Delta_4( \{x_i < x_j\})  &   \equiv &  x_{j,i}'' \bmod p, \\
\Delta_5( \{ x_i < x_j < x_k\}) & = & 0,1, \mbox{ or }2   \hskip 0.5cm   \mbox{  if }  \hskip 0.5cm   x_{k,i}' = x_{j,i}', ~ x_{k,i}' < x_{j,i}', \mbox{ or }  x_{k,i}' > x_{j,i}', \mbox{ respectively.}\\
 \end{eqnarray*} 
Iteratively applying Ramsey's theorem~\cite{ramsey} for pairs and then for triples
we get an infinite  subset  $Y = \{ y_1< y_2< \cdots\} $ of $X$ 
such that $\binom{Y}{2}$ and $\binom{Y}{3}$
are monochromatic with respect to $\Delta_4$ and $\Delta_5$ respectively.
In particular, let  $y''$ be the color of each pair with respect to $\Delta_4$, for $y'' \in \{1,2, \ldots, p-1\}$. I.e., $y''_{j,i} \equiv y'' \bmod p$ for all $i < j$.\\

We shall show next that the set $\binom{Y}{3} $ is monochromatic in color  $0$ under $\Delta_5$. 
Indeed, the set $\binom{Y}{3} $ cannot be monochromatic in color $1$ under $\Delta_5$, as there is no infinite strictly decreasing sequence of non-negative integers.
 Assume for a contradiction, that  the set $\binom{Y}{3}$ is monochromatic in color  $2$ under $\Delta_5$.  Then  $y_{j,i}' < y_{k,i}'$ for all $i < j < k$.  
 Consider $y_1, y_2, $ and $y_j$, $j>2$. We have that $y_{2,1}'<y_{j, 1}'$, thus 
 $$y_j-y_2 = (y_j-y_1)+(y_1-y_2) = p^{y_{j,1}'} y_{j,1}'' - p^{y_{2,1}'}y_{2,1}'' = p^{y_{2,1}'}( p^{y_{j, 1}'-y_{2,1}'}y_{j,1}''  - y_{j,2}'').$$ In particular, $y_{j, 2}' = y_{2,1}'=y'$.
 On the other hand 
   $$y_j-y_1 = (y_j-y_2)+(y_2-y_1) = p^{y_{j,2}'} y_{j,2}'' + p^{y_{2,1}'}y_{2,1}'' = p^{y'}(y_{j,2}''  + y_{2,1}'').$$
  We know  that $y_{j,2}'' \equiv y'' \bmod p$ and  $y_{2,1}'' \equiv y'' \bmod p$ for odd prime $p$,  so $y_{j,2}''  + y_{2,1}'' \not\equiv 0 \bmod p$ and $y_{j,2}''  + y_{2,1}''\equiv 2y'' \bmod p \not\equiv y'' \bmod p$.
Thus   $y_{j,1}'' \not\equiv y'' \bmod p$, a contradiction. 
Thus, $\binom{Y}{3}$ is monochromatic in color $0$ under $\Delta_5$.

Since   $\binom{Y}{3} $ is monochromatic in color  $0$ under $\Delta_5$, it implies in particular  $y_{j,i}' = y_i'$ for some $y_i'$ and all $j>i$.\\

Assume that some $p+1$  values $y'_i$ coincide and are equal to $\beta$, i.e., 
for some  $I=\{i_1<i_2<\cdots<i_{p+1}\}$,  and any $q\in I$, $y_q'= \beta$.
Then we have 
$y_{i_{j+1}} - y_{i _j}= p^{\beta} y''$ for all $j=1, \ldots, p$. 
This implies that 
$y_{i_{p+1}}-y_{i_1} = (y_{i_{p+1}}-y_{i_{p}})+ \cdots + (y_{i_2}- y_{i_1}) = p^\beta ( y_{i_{p+1},i_p}'' + \cdots +y_{i_2,i_1}'')$, but $ y_{i_{p+1},i_p}'' + \cdots +y_{i_2,i_1}''  \equiv py'' \equiv 0 \bmod p$.  Thus $y_{i_{p+1}, i_1}'  > \beta$.  In particular $y_{i_{p+1}, i_1}' \neq y'_{i_2, i_1}$, a contradiction.
Thus, we can choose an infinite subset of $Y$ so that the $y_i'$'s are distinct.  
Finally, since there is no infinite decreasing sequence of non-negative integers, we can assume that  the $y_i'$s are increasing. 
Let $Z = \{ z_1<z_2<\cdots \} $ be such a subset of $Y$.
Then $Z$ satisfies the conditions of the lemma.
\end{proof}


For the proof of Theorem \ref{prop-1,+1} and well as Theorem \ref{prop-1,+1,ell}, we shall need the following colorings and the lemma.
Consider  colorings $\Delta_1$,  $\Delta_2$, and $\Delta_3$  defined as follows for a prime $p\geq 3$:\\
\begin{itemize}
\item{}
 $\Delta_1 \colon {\mathbb N} \longrightarrow {\mathbb N}$, where 
$\Delta_1(x) = i$ if and only if $x \in [2^i, 2^{i+1} - 1]$, for an integer $i$.
\item{}
$\Delta_2 \colon {\mathbb N} \longrightarrow ({\mathbb N} \cup \{0\}) \times \{1,2, \ldots , p-1\}$ with 
$\Delta_2(x) = (x',x''\bmod p)$ if $x=p^{x'}
 x''$, where $x''$ is an integer, which is not divisible by $p$.
 \item $\Delta_3(x)= (\Delta_1(x), \Delta_2(x))$.
\end{itemize}

For a set $X=\{x_1<x_2<\cdots\}$, recall that $X_{alt}^*= X_{alt}^*(k)=\{  \sum^*_{j\in J}x_j :  J\in \binom{\mathbb{N}}{k}\}$.

\begin{lemma}\label{unavoidable-i-v}
Consider the coloring patterns defined for an infinite set $X=\{x_1<x_2<\cdots\}$ of integers:
\begin{itemize}
\item[(i\p)]  $X_{alt}^*$ is  monochromatic, 
\item[(ii\p)]  $X_{alt}^* $  is rainbow, 
\item[(iii\p)]   $\Delta(\sum^*_{j\in J } x_j)= \Delta(\sum^*_{j\in I } x_j)$  if and only if $\max I= \max J$,  for all  $ I, J\in \binom{\mathbb N}{k}$, 
\item[(iv\p)]  $\Delta( \sum^*_{j\in J} x_j)= \Delta( \sum^*_{j\in I} x_j)$ if and only if $\min I= \min J$, for all  $I,J\in \binom{\mathbb{N}}{k}$,  
 \item[(v\p)]    $\Delta(\sum^*_{j\in J } x_j)= \Delta (\sum^*_{j\in I} x_j)$ if and only if $\min I=\min J$ and $\max I=\max J$,  for all $I,J\in \binom{\mathbb N}{k}$.
\end{itemize}
For $k=2$ items (ii\p) and (v\p) coincide. For any infinite set $X=\{x_1<x_2<\cdots\}$ of natural numbers and $k\geq 2$:
\begin{itemize}
\item{} The coloring $\Delta_1$ does not satisfy (i\p ), (ii\p), (iv\p), and (v\p),
\item{} The coloring $\Delta_2$ does not satisfy (i\p ), (ii\p), (iii\p), and (v\p),
\item{} The coloring $\Delta_3$ does not satisfy (i\p), (ii\p), (iii\p), and (iv\p) for $k\geq 4$.
\end{itemize}
\end{lemma}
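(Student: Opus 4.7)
I would treat the three colourings in turn. The main tool for $\Delta_2$ is Lemma~\ref{lemma,2,3,4}, which pins down the $p$-adic structure of any alternating sum on a suitably thinned set; for $\Delta_1$ the argument relies on elementary dyadic pigeonholing; for $\Delta_3$ I combine the two analyses on the thinned set.

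For $\Delta_1$, I introduce, for $m>k+1$ and $i\in\{0,1,2\}$, the three distinct $k$-element sets $J_m^{(i)}=\{i+1,i+2,\ldots,i+k-1,m\}$; a direct computation gives $\sum^*_{j\in J_m^{(i)}}x_j=x_m+c_i$ for three fixed constants $c_0,c_1,c_2$ depending only on $X$ and $k$. Since $\{c_0,c_1,c_2\}$ is bounded while $x_m\to\infty$, for every sufficiently large $m$ the three values $x_m+c_0$, $x_m+c_1$, $x_m+c_2$ lie in at most two consecutive dyadic intervals, so by pigeonhole two of them share a $\Delta_1$-colour. The corresponding pair of distinct sets has the same maximum $m$ but different minima, which simultaneously kills (ii\p) and (v\p). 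For (i\p) and (iv\p) I use only $J_m^{(0)}$ with varying $m$: all have $\min=1$, yet $\Delta_1\bigl(\sum^*_{j\in J_m^{(0)}}x_j\bigr)=\lfloor\log_2(x_m+c_0)\rfloor$ tends to infinity, simultaneously ruling out monochromaticity and exhibiting two $J$'s with equal minimum but unequal colours.

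For $\Delta_2$, I first apply Lemma~\ref{lemma,2,3,4} to thin $X$ to $Z=\{z_1<z_2<\cdots\}$ satisfying $z_j-z_i=p^{\alpha_i}w_{ij}$ with $p\nmid w_{ij}$, $w_{ij}\equiv y''\pmod p$, and $\alpha_1<\alpha_2<\cdots$. Grouping the alternating sum for $J=\{j_1<\cdots<j_k\}$ into $k/2$ consecutive pairs gives
\[\sum^*_{j\in J}z_j\;=\;\sum_{\ell=1}^{k/2}(z_{j_{2\ell}}-z_{j_{2\ell-1}})\;=\;\sum_{\ell=1}^{k/2}p^{\alpha_{j_{2\ell-1}}}w_{j_{2\ell-1},j_{2\ell}}.\]
Because $\alpha_{j_1}<\alpha_{j_3}<\cdots$, factoring out $p^{\alpha_{j_1}}$ leaves a factor congruent to $w_{j_1,j_2}\equiv y''\not\equiv 0\pmod p$; hence $\Delta_2\bigl(\sum^*_{j\in J}z_j\bigr)=(\alpha_{j_1},y'')$, a function of $\min J$ alone. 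This single identity kills (i\p) and (ii\p) at once (different minima yield different colours while many $J$'s share the same minimum), kills (iii\p) by taking any $J,J'$ with equal maxima but different minima, and kills (v\p) by taking any $J,J'$ with equal minima but different maxima. All failures transfer from $Z$ back to $X$ because $Z_{alt}^*\subseteq X_{alt}^*$.

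For $\Delta_3=(\Delta_1,\Delta_2)$ with $k\ge 4$, I combine both analyses on $Z$. Failures of (i\p) and (iv\p) are inherited from the $\Delta_1$-arguments applied to $J_m^{(0)}$ on $Z$, and failure of (iii\p) is inherited from the $\Delta_2$-argument (same maximum with different minima forces different $\alpha_{j_1}$). The only nontrivial case is (ii\p): I must produce two distinct $J,J'$ whose $\Delta_1$- and $\Delta_2$-colours both agree. Here $k\ge 4$ is essential, for it leaves room to fix $\min J=\min J'=1$ (pinning the $\Delta_2$-colour to $(\alpha_1,y'')$ for both) and the common maximum $z_m$, while varying a single interior index over three choices, e.g.\ $J_m^{(i)}=\{z_1,z_2,z_{3+i},z_m\}$ for $i\in\{0,1,2\}$ when $k=4$, and extending by a fixed interior block of length $k-4$ for larger $k$. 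The three alternating sums then read $z_m+C_0$, $z_m+C_1$, $z_m+C_2$ with the $C_i$ fixed, so the dyadic pigeonhole from the $\Delta_1$-paragraph delivers two of them sharing a $\Delta_1$-colour. The main obstacle I would flag is precisely this dyadic-boundary issue, which is why three choices (rather than two) are used; once resolved, the two selected sets $J,J'$ satisfy $\Delta_3\bigl(\sum^*_{j\in J}z_j\bigr)=\Delta_3\bigl(\sum^*_{j\in J'}z_j\bigr)$, ruling out rainbowness.
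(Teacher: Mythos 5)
Your overall strategy follows the same route as the paper: thin via Lemma~\ref{lemma,2,3,4}, analyze $\Delta_2$ through $p$-adic valuations of differences, analyze $\Delta_1$ by dyadic pigeonholing, and combine the two for $\Delta_3$. Your treatment of $\Delta_2$ is in fact cleaner than the paper's: since for even $k$ every alternating sum is a sum of differences of pairs, you obtain the single formula $\Delta_2\bigl(\sum^*_{j\in J}z_j\bigr)=(\alpha_{\min J},y'')$ on the thinned set $Z$, which disposes of (i$'$), (iii$'$) and (v$'$) uniformly and avoids the paper's case split (a)/(b) governed by the valuations of the elements themselves; and the transfer of failures from $Z$ back to $X$ is legitimate, because each of the five patterns is inherited by subsequences under the order-preserving re-indexing.

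There is, however, a genuine gap in your $\Delta_1$ argument against pattern (ii$'$) for $k\geq 4$. You shift an entire block, $J_m^{(i)}=\{i+1,\ldots,i+k-1,m\}$, so the three sums are $x_m+c_0$, $x_m+c_1$, $x_m+c_2$, but nothing forces the $c_i$ to be pairwise distinct: for $k=4$, any $X$ whose first four elements satisfy $x_4=2x_3-2x_2+x_1$ (for instance $1,2,4,5$, continued arbitrarily) has $c_0=c_1$. Rainbowness is a property of the set $X_{alt}^*$ of values, so two index sets producing the same integer do not violate it, and the pigeonhole may return exactly the coinciding pair, in which case no contradiction with (ii$'$) is obtained; since the lemma must hold for every infinite $X$, this step can genuinely fail as written (the same pair does still kill (v$'$), and your (i$'$)/(iv$'$) arguments are fine). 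The repair is precisely the device you already use for $\Delta_3$ and which the paper uses throughout: fix the minimum and the maximum and vary a single interior index (the paper takes $Q,S,T$ differing only in the second-smallest element), so the three sums are strictly monotone, hence pairwise distinct, and the dyadic pigeonhole then produces two distinct values of equal color. A much smaller point of the same kind: for the $\Delta_2$ rainbow failure you should exhibit a concrete pair with equal minimum and distinct values, e.g.\ $\{1,\ldots,k\}$ and $\{1,\ldots,k-1,k+1\}$, rather than only remarking that many $J$'s share a minimum.
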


\begin{proof} Let $X=\{x_1<x_2<\cdots\}\subset {\mathbb N}$ be an infinite set, where by taking an infinite subset, we can assume that distances between consecutive elements of $X$  strictly increase and in particular that $X$ is rainbow under $\Delta_1$, i.e.,  $\Delta_1(x_1)<\Delta_1(x_2)<\cdots$.  
As a consequence, $X$ is rainbow under $\Delta_3$.
By Lemma \ref{lemma,2,3,4}, we can further assume that in the second coordinate the colors of $x_i$'s under $\Delta_2$ are all the same, thus for some $x'' \in \{1,2, \ldots p-1\}$, we have $\Delta_2(x_i) = (x_i', x'')$, for all $i$, and  $x_{i,j}' = x_{k, \ell}'$ if and only if $i = \ell$,  for $j>i$ and $k>\ell$.

Moreover, by Ramsey's theorem~\cite{ramsey} for pairs we can assume that 
one of the next two cases holds:\\

(a) $x_1'<x_2'< \cdots$, or \\
(b) for some $x'$, we have  $x_i'=x'$, for all $i$.\\

Next we consider the colorings $\Delta_1, \Delta_2, $ and $\Delta_3$ and patterns  from (i\p)-(v\p) that are not satisfied by these colorings.
\begin{itemize}

\item{$\mathbf {\Delta_1, \Delta_3, (i'), (iii\p):}$} ~~~ For  $\Delta_1$ patterns~(i\p) and~(iv\p) do not hold for $k \geq 2$. In particular, 
there are some sets $I, J\in \binom{{\mathbb N}}{k}$ such that $\min I = \min J$ and 
$\Delta_1(\sum^*_{j\in I} x_j)\neq  \Delta_1(\sum^*_{j\in J} x_j)$. 
 Thus, in particular the patterns (i\p) and (iv\p) do not hold for  $\Delta_3$.  \\

\item{$\mathbf {\Delta_1,  (iv\p),  k\geq 4:}$} ~~ For $\Delta_1$ pattern~(v\p) doesn't hold for $k \geq 4$. Indeed, 
consider  any $(k+2)$ positive integers 
$
x_{i_1} <x_{i_2} < x_{i_3} < \cdots  < \cdots < x_{i_{k}} < x_{i_{k+1}} < x_{i_{k+2}}$ in $X$.
Let $R, B,$ and $G$ be $k$-element sets of indices defined as follows:
\begin{eqnarray*}
R & = & \{i_1, i_3, i_4,\ldots,i_{k-1},  i_{k}, i_{k+2}\},\\
B & = &  \{i_1, i_3, i_4,\ldots, i_{k-1}, i_{k+1}, i_{k+2}\},\\
G & = & \{i_2, i_3, i_4, \ldots, i_{k-1}, i_{k}, i_{k+2}\}.
\end{eqnarray*}
Let $r= \sum^* _{j\in R} x_{j}$, $b= \sum^* _{j\in B} x_{j}$, and $g= \sum^* _{j\in G} x_{j}$.
With our assumption on strictly increasing differences between consecutive pairs, i.e., $x_{i_{k+1}}- x_{i_{k}} > x_{i_2} - x_{i_1}$  we have $b < g$, and with $x_{i_1} < x_{i_2}$ we have $g < r$, thus $b < g < r$. If pattern~(v)  were to hold, then we would have $\Delta_1(b) = \Delta_1(r)$, as  $\max B= \max R$ and $\min B = \min R$. By choice of the coloring we conclude $\Delta_1(g) = \Delta_1(b)$, which is a contradiction, as  $\min G \neq \min B$. So pattern~(v\p) does not hold.\\

\item{$\mathbf {\Delta_1,  (ii'), (v'),  k=2:}$} ~~{ For $\Delta_1$ patterns~(ii\p) and~(v\p) do not hold for $k = 2$}. Indeed, 
consider  for any $m\geq 4$ positive integers 
$x_{i_1} <x_{i_2}  < \cdots < x_{i_{m}}$
in $X$ sufficiently apart from each other. 
Consider the differences 
$
x_{i_{m}} -x_{i_1}, x_{i_{m}} -x_{i_2}, \cdots , x_{i_{m}} -x_{i_{m-1}}.
$
We can assume that  $x_{i_{m}}\in [ 2^\ell, 2^{\ell + 1} - 1]$ for some   $\ell$ and  $x_{i_j}< 2^{\ell -1}$, for $j=1,2, \ldots, m-1$.
Thus $x_{i_m} - x_{i_j} \in [2^{\ell-1}, 2^{\ell + 1} - 1]$, for $j=1,2, \ldots, m-1$, and hence at least two of  $x_{i_m} - x_{i_j}$, $j=1,2, \ldots, m-1$ are colored the same under $\Delta_1$, say $\Delta_1( x_{i_m} - x_{i_p}) = \Delta_1( x_{i_m} - x_{i_q})$. Clearly, the mimima
of the sets $\{i_p, i_m\}$ and $\{i_q, i_m\}$ are distinct.
 This  implies that patterns~(ii\p) and~(v\p) do not hold for $k=2$.\\

\item{$\mathbf {\Delta_1,  (ii'), k\geq 4:}$} ~~~~For $\Delta_1$ pattern~(ii\p) doesn't hold for {$k\geq 4$}.
Let $Q, S,$ and $T$ be $k$-element sets of indices defined as follows:
\begin{eqnarray*}
Q& = & \{i_1, i_2, ~  i_5, i_6, \ldots, i_{k+1}, i_{k+2}\}\\
S & = &  \{i_1, i_3, ~  i_5, i_6,\ldots, i_{k+1}, i_{k+2}\}\\
T & = & \{i_1, i_4, ~ i_5, i_6, \ldots, i_{k+1}, i_{k+2}\}.
\end{eqnarray*}
Let 
\begin{equation} \label{qs}
q= \sum^* _{j\in Q} x_{j},  ~~ s= \sum^* _{j\in S} x_{j},  \mbox{ and } t= \sum^* _{j\in T} x_{j}.
\end{equation}

 By considering indices $i_1, i_2,\ldots, i_{k+2}$ sufficiently far from each other, we can assume that  $x_{i_{k+2}}\geq 2^\ell$ for some  large $\ell$ and  $x_{i_j}< {2^{\ell -1}}$, for $j=1, \ldots, k+1$.
Thus $q, s, t \geq 2^{\ell-1}$ and $|s-t|, |s-q|, |t-q|< 2^{\ell-1}$. If $\min\{q, s, t\} \in [2^{\ell'}, 2^{\ell'+1}-1]$ for $\ell'\geq \ell-1$, 
then $q, t, s \in [2^{\ell'}, 2^{\ell'+1}-1+2^{\ell-1}]$, i.e, $\Delta(q), \Delta(t), \Delta(s) \in \{\ell', \ell'+1\}$. Thus two of $q,s,t$ have the same color and therefore $X_{alt}^*$ is not rainbow.  Assume without loss of generality that 
\begin{equation}\label{qs-Delta}
\Delta_1(q)=\Delta_1(s).
\end{equation} 

This in particular implies that pattern~(ii\p) does not hold.\\

 \item{$\mathbf { \Delta_2, (a), (ii'), (v'):} $} ~~~~If (a) is satisfied, then patterns~(ii\p) and~(v\p)  do not  hold for $\Delta_2$ for $k \geq 2$.
 Namely, we have for $s$ and $q$ defined in (\ref{qs}), that
 \begin{eqnarray*}
  \Delta_2( q) &= & \Delta_2( x_{i_{k+2}} - x_{i_{k+1}} + \cdots + x_{i_2} - x_{i_1}) = (x_{i_1}', -x'')=\\ 
 &=& \Delta_2( x_{i_{k+2}} - x_{i_{k+1}} +\cdots + x_{i_3} - x_{i_1})= \Delta_2(s). 
 \end{eqnarray*}
 so pattern~(ii\p) does not hold.
 
 Moreover, we have 
  \begin{eqnarray*}
   && \Delta_2( x_{i_{k+3}} - x_{i_{k+1}} + x_{i_k} - x_{i_{k-1}} +\cdots + x_{i_2} - x_{i_1}) = (x_{i_1}', -x'')=\\ 
 &=& \Delta_2( x_{i_{k+2}} - x_{i_{k+1}} +x_{i_k} - x_{i_{k-1}} +\cdots + x_{i_3} - x_{i_1})= \Delta_2(s). 
 \end{eqnarray*}
 so pattern~(v\p) does not hold.\\

\item{$\mathbf { \Delta_2, (a), (i'), (iii'):} $} If~(a) is satisfied, i.e., $x_i' < x_{i+1}'$, for all $i$, then patterns~(i\p) and~(iii\p) do not hold for $\Delta_2$ for  $k \geq 2$. Namely, we have
\begin{eqnarray*}
&& \Delta_2( x_{k+1} - x_{k} + \cdots + x_5 - x_4 + x_3 - x_1) \\
&=&
 \Delta_2( p^{x_{k}'} \cdot (p^{x_{k+1}' - x_{k}'} x_{k+1}'' - x_{k}'') + \cdots  + p^{x_1'} \cdot (p^{x_3' - x_1'} x_3'' - x_1'')) \\
 &=& (x_1', {p-x'')} \\
&\neq& \Delta_2( x_{k+1} - x_{k} + \cdots + x_5 - x_4 + x_3 - x_2) \\
&=&
 \Delta_2( p^{x_{k}'} \cdot (p^{x_{k+1}' - x_{k}'} x_{k+1}'' - x_{k}'') + \cdots  + p^{x_2'} \cdot (p^{x_3' - x_2'} x_3'' - x_2'')) \\
   &=& (x_2',p-x'').
 \end{eqnarray*}
\\

\item{$\mathbf {\Delta_2,  \Delta_3, (b), (ii'), k \geq 4:} $} ~~~~If  ~(b) is satisfied,  then pattern~(ii\p)  does not hold for $\Delta_2$ and $\Delta_3$,  $k\geq 4$.\\
In this case we have  $x_{i}' =x'$, for all $i$.
Recall that we assumed by Lemma~\ref{lemma,2,3,4}  that $X$ satisfies:\\
$x_{k,i}' = x_{j,i}'$, for all $1 \leq i < j < k$,   \\
$x_{j,i}'' \equiv x'' \bmod p$ for some $x'' \in \{1,2,\ldots, p-1\}$,
 for all $i < j$, and  \\
$x_{j,i}' < x_{\ell, k}'$ for all $i < j < k < \ell$.\\

For $k\geq 4$ consider $q$ and $s$ as in (\ref{qs}). 
We have from   (\ref{qs-Delta}) that $\Delta_1(q)=\Delta_1(s)$. We have 
\begin{eqnarray*}
\Delta_2(q) &=& \Delta_2(x_{i_{k+2}} - x_{i_{k+1}}  + \cdots + x_{i_{6}} -  x_{i_{5}}   +     x_{i_{2}} 
 - x_{i_{1}})\\
&=&  \Delta_2( p^{x'} \cdot ( p^{x_{i_{k+2},i_{k+1}}'}x_{i_{k+2},i_{k+1}}'' + \cdots + p^{x_{i_{6},i_{5}}'}x_{i_{6},i_{5}}'' +   p^{x_{i_{2},i_{1}}'}x_{i_{2},i_{1}}''))
\\
&=&
(x' + x_{i_{2},i_{1}}', -x''),  \mbox{~~ and}
\end{eqnarray*}
\begin{eqnarray*}
\Delta_2(s) 
&=&\Delta_2(x_{i_{k+2}} - x_{i_{k+1}}  +  \cdots + x_{i_{6}} -  x_{i_{5}}  +     x_{i_{3}} - 
 x_{i_{1}})\\
&=&  \Delta_2( p^{x'} \cdot ( p^{x_{i_{k+2},i_{k+1}}'}x_{i_{k+2},i_{k+1}}'' + \cdots +   p^{x_{i_{6},i_{5}}'}x_{i_{6},i_{5}}''+  p^{x_{i_{3},i_{1}}'}x_{i_{3},i_{1}}''))\\
&=&
(x' + x_{i_{2},i_{1}}', -x''),
\end{eqnarray*}
thus $\Delta_2(q) = \Delta_2(s)$.
As $ x_{i_{2}, i_1}'=  x_{i_{3},i_{1}}'$ and $x_{\ell, k}' > x_{j,i}'$ for all $1 \leq i < j < k < \ell$, we have
$\Delta_3(q) = \Delta_3(s)$.   Thus pattern (iii\p) is not satisfied by $\Delta_2$ and thus by $\Delta_3$.\\

\item{$\mathbf{\Delta_2, (b), (i'), (iii'), (v')}$} Patterns~(i\p) and~(iii\p) do not hold under $\Delta_2$ for $k \geq 4$ when (b) is satisfied.
Consider
\begin{eqnarray*}
 && \Delta_2(x_{i_{k+2}} - x_{i_{k+1}}   +  \cdots +    x_{i_{6}} - 
 x_{i_{5}}  +    x_{i_{4}} - 
 x_{i_{3}}  +  x_{i_{2}} - 
 x_{i_{1}})\\
&=&  \Delta_2( p^{x'} \cdot ( p^{x_{i_{k+2},i_{k+1}}'}x_{i_{k+2},i_{k+1}}'' +  \cdots + p^{x_{i_{6},i_{5}}'}x_{i_{6},i_{5}}'' +  p^{x_{i_{4},i_{3}}'}x_{i_{4},i_{3}}''+  p^{x_{i_{2},i_{1}}'}x_{i_{2},i_{1}}''))
\\
&=&
(x' + x_{i_{2},i_{1}}', z'')
  \\
&\neq & \Delta_2(x_{i_{k+2}} - x_{i_{k+1}}  +  \cdots    +     x_{i_{6}} - 
 x_{i_{5}}  +     x_{i_{4}} - 
 x_{i_{3}})\\
&=&  \Delta_2( p^{x'} \cdot ( p^{x_{i_{k+2},i_{k+1}}'}x_{i_{k+2},i_{k+1}}'' + \cdots +   p^{x_{i_{6},i_{5}}'}x_{i_{6},i_{5}}''+  p^{x_{i_{4},i_{3}}'}x_{i_{4},i_{3}}'')) \\
&=&
(x' + x_{i_{4},i_{3}}', z''),
\end{eqnarray*}
as $ x_{i_{2},i_{1}}' <  x_{i_{4},i_{3}}'$,
so patterns~(ii\p) and~(iii\p) do not hold under $\Delta_2$. \\

Pattern~(v\p) does not hold for $\Delta_2$ for $k\geq 4$. Namely, consider 
\begin{eqnarray*}
 && \Delta_2 (x_{i_k} -x_{i_{k-1}} + \cdots + x_{i_{4}} -  x_{i_{3}}   +     x_{i_{2}} 
 - x_{i_{1}})\\
&=&  \Delta_2( p^{x'} \cdot ( p^{x_{i_{k},i_{k-1}}'}x_{i_{k},i_{k-1}}'' + \cdots + p^{x_{i_{4},i_{3}}'}x_{i_{4},i_{3}}'' +   p^{x_{i_{2},i_{1}}'}x_{i_{2},i_{1}}''))
\\
&=&
(x' + x_{i_{2},i_{1}}', z'') \\
&=& 
\Delta_2(x_{i_{k+1}} - x_{i_{k-1}}  + x_{i_{k-2}} - x_{i_{k-3}}+ \cdots + x_{i_{4}} -  x_{i_{3}}  +     x_{i_{2}} - 
 x_{i_{1}})\\
&=&  \Delta_2( p^{x'} \cdot ( p^{x_{i_{k+1},i_{k-1}}'}x_{i_{k+1},i_{k-1}}'' + \cdots +   p^{x_{i_{4},i_{3}}'}x_{i_{4},i_{3}}''+  p^{x_{i_{2},i_{1}}'}x_{i_{2},i_{1}}''))\\
&=&
(x' + x_{i_{2},i_{1}}', z''),
\end{eqnarray*}
as $x_{j,i}' < x_{\ell, k}'$, for all $ i < j < k < \ell$, 
so pattern~(v\p) does not hold under $\Delta_2$.\\

{Let $k=2$. Consider $x_{i_2} - x_{i_1}$ and $x_{i_3} - x_{i_1}$. By Lemma~\ref{lemma,2,3,4} we have
$\Delta_2(x_{i_2} - x_{i_1}) = \Delta_2(x_{i_3} - x_{i_1})$, as $x_{i_2,i_1}' =x_{i_3, i_1}'$, so patterns~(ii\p) and~(iii\p)  do not hold for $k=2$ under $\Delta_2$. Moreover, consider $x_{i_2} - x_{i_1}$ and $x_{i_4} - x_{i_3}$. By Lemma~\ref{lemma,2,3,4} we have
$\Delta_2(x_{i_2} - x_{i_1}) \neq \Delta_2(x_{i_4} - x_{i_3})$, as $x_{i_2,i_1}' <x_{i_4, i_1}'$, so pattern~(i\p) also does not hold under $\Delta_2$ for $k=2$.}\\

\item{$\mathbf { \Delta_3, (b), (iii'), k\geq 4:} $} ~~~~ If (b) is satisfied, then pattern~(iii\p) does not hold for  $\Delta_3$ and $k\geq 4$. Indeed:

\begin{eqnarray*}
 && \Delta_2(x_{i_{k+2}} - x_{i_{k+1}}  +  \cdots    +     x_{i_{6}} - 
 x_{i_{5}}  +     x_{i_{2}} - 
 x_{i_{1}})\\
&=&  \Delta_2( p^{x'} \cdot ( p^{x_{i_{k+2},i_{k+1}}'}x_{i_{k+2},i_{k+1}}'' +  \cdots +   p^{x_{i_{6},i_{5}}'}x_{i_{6},i_{5}}''+  p^{x_{i_{2},i_{1}}'}x_{i_{2},i_{1}}''))
\\
&=&
(x' + x_{i_{2},i_{1}}', z'')
  \\
&\neq & \Delta_2(x_{i_{k+2}} - x_{i_{k+1}}  +  \cdots +     x_{i_{6}} - 
 x_{i_{5}}  +     x_{i_{4}} - 
 x_{i_{3}})\\
&=&  \Delta_2( p^{x'} \cdot ( p^{x_{i_{k+2},i_{k+1}}'}x_{i_{k+2},i_{k+1}}'' + \cdots +   p^{x_{i_{6},i_{5}}'}x_{i_{6},i_{5}}''+  p^{x_{i_{4},i_{3}}'}x_{i_{4},i_{3}}'')) \\
&=&
(x' + x_{i_{4},i_{3}}', z''),
\end{eqnarray*}
 as $ x_{i_{2},i_{1}}' <  x_{i_{4},i_{3}}'$.\\

\item{ $\mathbf { \Delta_3,  (a), (ii') }$}  ~~~~  If (a) is satisfied then pattern (ii\p) does not hold for $\Delta_3$.  Indeed, considering the alternating sums $s$ and $q$ defined in (\ref{qs}), we have $\Delta_1(s)=\Delta_1(q)$ and $\Delta_2(s)=\Delta_2(q)$, thus $\Delta_3(s)=\Delta_3(q)$.  Hence $X_{alt}^*$ is not rainbow and pattern~(ii\p) does not hold.
\end{itemize}

Summarising all these items, we see that the lemma holds.
\end{proof}

\begin{proof}[Proof of Theorem \ref{prop-1,+1}]

Let $\Delta\colon {\mathbb N}  \longrightarrow {\mathbb N}$  be an arbitrary coloring.   Consider another
coloring $\Delta'\colon \PP'  \longrightarrow {\mathbb N},$
defined  for any finite set $S\subseteq \mathbb{N}$ by 
$$
\Delta' ( S) = \Delta \left( \sum_{i\in S }  2^i \right).
$$

By Theorem~\ref{theo.a.canonicaln} applied to a family ${\mathcal A}$, whose members are single element subsets of $\mathbb{N}$, there exists a subfamily ${\mathcal B} =\{B_1 < B_2 < \cdots \}$ of ${\mathcal A}'$,  a family of  finite subsets of $\mathbb{N}$, such that one of the five canonical patterns~(i)-(v)  with respect to the coloring $\Delta'$ hold.  

Consider the integers
$$x_j = \sum_{t\in B_1 \cup B_2 \cup \cdots \cup B_j} 2^t, \,\, j=1,2,\ldots.$$ 
Let $X=\{x_1 < x_2 < \cdots \} \subseteq {\mathbb N}$ be an infinite set.
For a set $J= \{j_1 < j_2 < \cdots < j_k\}$, even $k$, of positive integers,   let 
$$J_{alt} = \bigcup _{\ell =1}^{k/2}\bigcup_{{s}=1}^{j_{2\ell} - j_{2\ell-1}} \{{j_{2\ell-1} + {s}}\},  \mbox{    i.e.,}$$ 
$$J_{alt} = \{j_1+1, j_1 + 2, \ldots, j_2, ~\,~\,~\, j_3+1, j_3 +2, \ldots, j_4, ~\,~\,~\ldots,~~\,\,~ j_{k-1}+1, j_{k-1} + 2, \ldots, j_k\}$$
and let
$$B_{J_{alt}} = \bigcup_{i\in J_{alt}} B_i.$$

Then we have  
$$\sum^*_{j\in J} x_j = \sum_{ t \in  B_{J_{alt}}}  2^t.$$

In particular, we see that $\Delta$ on $X\cup X_{alt}^*$ corresponds to $\Delta'$ on specific finite unions of sets $B_i$'s:
$$\Delta'(B_1\cup B_2 \cup \cdots \cup B_j) = \Delta(x_j) ~  \mbox{   and   } ~ \Delta'( \bigcup_{i\in J_{alt}} B_i) =\Delta(\sum^*_{j\in J} x_j ).$$

\vskip 1cm

First  we shall show that the sets $X= \{ x_1 < x_2  < \ldots \}$ and $X_{alt}^*$ satisfy  the five canonical patterns described in the statement of the theorem.
\begin{itemize}
\item{} If we have pattern~(i)  of Theorem~\ref{theo.a.canonicaln} for coloring $\Delta'$, i.e., the set of all finite unions of $B_i$'s is monochromatic, then 
 $X \cup X_{alt}^*$ is  monochromatic under $\Delta$.   Similarly, if we have pattern~(ii) of Theorem~\ref{theo.a.canonicaln} for $\Delta'$,  
then  $X \cup X_{alt}^*$ is  rainbow under $\Delta$.
\item{}If we have pattern~(iii) of Theorem~\ref{theo.a.canonicaln} for $\Delta'$, then  $X$ is rainbow under $\Delta$, 
and $\Delta (x_\ell) = \Delta( \sum^*_{j\in J} x_j)$
 if and only if $\ell = \max J$, and $\Delta( \sum^*_{i\in I} x_i)= \Delta( \sum^*_{j\in J} x_j)$ if and only if $\max I= \max J$, for all $\ell\in \mathbb{N}$ and $I,J\in \binom{\mathbb{N}}{k}$.
\item{} If we have pattern~(iv) of Theorem~\ref{theo.a.canonicaln} for  $\Delta'$, then  $X$ is monochromatic under $\Delta$, and
  $\Delta (x_\ell) \neq \Delta( \sum^*_{j\in J} x_j)$, and  
$\Delta( \sum^*_{i\in I} x_i)= \Delta( \sum^*_{j\in J} x_j)$ if and only if $\min I= \min J$, for all $\ell\in \mathbb{N}$ and $I,J\in \binom{\mathbb{N}}{k}$.
\item{}If we have pattern~(v)  of Theorem~\ref{theo.a.canonicaln} for  $\Delta'$, then $X$ is rainbow under $\Delta$,  and $\Delta (x_\ell) \neq \Delta( \sum^*_{j\in J} x_j)$, and
 ~~$\Delta( \sum^*_{i\in I} x_i)$ $= \Delta( \sum^*_{j\in J} x_j)$ if and only if $\min I=\min J$ and $\max I =\max J$, for all $\ell\in \mathbb{N}$ and $I, J\in \binom{\mathbb{N}}{k}$.
\end{itemize}
Thus one of the patterns~(i)-(v) holds for $\Delta$.

Clearly patterns~(i) and~(ii) are unavoidable by considering a monochromatic and a rainbow coloring of $\mathbb{N}$, respectively. The necessity of each of the patterns (iii)-(v) follows from Lemma \ref{unavoidable-i-v} since if the pattern (i\p), (ii\p), (iii\p), (iv\p),  or (v\p) doesn't hold for a coloring, then the respective pattern (i),  (ii), (iii), (iv),  or (v) also doesn't hold.

This completes the proof of Theorem~\ref{prop-1,+1}.
\end{proof}

\vskip 1cm

\section{The Set $X \cup \{ x_{k} - x_{k-1} + \cdots + x_{3} - x_{2} + x_{1}: ~ x_1, x_2, \ldots, x_k \in X,   ~ x_{1}< x_{2} < \cdots < x_{k} \}$, $k\geq 3$~odd}\label{X-alternating-odd}

We use the following notation for a $k$-element set $J=\{j_1<j_2< \cdots <j_k\}$ with $k$ odd and positive integers $x_j$, $j\in J$:
$$\sum^{**}_{j\in J} x_j =   x_{j_k} - x_{j_{k-1}} +x_{j_{k-2}} - x_{j_{k-3}} + \cdots + x_{j_3} - x_{j_2} + x_{j_1}.$$
We refer to this also as \emph{alternating sum}. 

\begin{theorem} \label{prop+1,-1}
 Let $k \geq 3$ be an odd integer. For any coloring $\Delta \colon {\mathbb N} \longrightarrow {\mathbb N}$ there exists an infinite set $X = \{x_1 < x_2 <\cdots\}$ of positive integers 
such that the sets
$X$ and $X_{alt}^{**}= X_{alt}^{**}(k)=\{  \sum^{**}_{j\in J}x_j :  J\in \binom{\mathbb{N}}{k}\}$ are  colored according to one of the following patterns:
\begin{itemize} 
\item[(i)] $X \cup X_{alt}^{**}$ is monochromatic, or
\item[(ii)]  $X \cup X_{alt}^{**}$  is rainbow, or 
\item[(iii)]$X$  is rainbow  and  $\Delta(\sum^{**}_{j\in J } x_j)= \Delta(x_{\max J})$, for all $J\in \binom{\mathbb N}{k}$.
\end{itemize}
None of these three patterns may be omitted without violating the theorem.
\end{theorem}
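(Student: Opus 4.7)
The plan is to follow the template of the proof of Theorem~\ref{prop-1,+1}: I will establish the existence of the three canonical patterns by reducing to Theorem~\ref{theo.a.canonicaln} (the finite-set version of Taylor's theorem), and establish necessity by exhibiting, for each pattern, a coloring of $\mathbb{N}$ under which the other two patterns fail for every infinite subset. Given $\Delta\colon \mathbb{N}\to\mathbb{N}$, define $\Delta'\colon \PP'\to\mathbb{N}$ by $\Delta'(S) = \Delta\bigl(\sum_{s\in S} 2^s\bigr)$, apply Theorem~\ref{theo.a.canonicaln} to $\mathcal{A} = \{\{1\},\{2\},\ldots\}$ to obtain $B_1<B_2<\cdots$ in $\PP'$ realising one of Taylor's five patterns under $\Delta'$, and set $x_j = \sum_{t\in B_1\cup\cdots\cup B_j} 2^t$; then $X = \{x_1<x_2<\cdots\}$ is infinite.

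For a set $J=\{j_1<\cdots<j_k\}\in \binom{\mathbb{N}}{k}$ with $k\geq 3$ odd, I pair the alternating sum as $(x_{j_k}-x_{j_{k-1}}) + \cdots + (x_{j_3}-x_{j_2}) + x_{j_1}$ and define
\begin{equation*}
J_{alt}^{**} = \{1,\ldots,j_1\}\cup\{j_2+1,\ldots,j_3\}\cup\cdots\cup\{j_{k-1}+1,\ldots,j_k\},
\end{equation*}
so that $\sum^{**}_{j\in J} x_j = \sum_{t\in \bigcup_{i\in J_{alt}^{**}} B_i} 2^t$. The crucial observation, specific to $k$ odd, is that the extra summand $+x_{j_1}$ contributes the initial block $\{1,\ldots,j_1\}$, forcing $\min J_{alt}^{**}=1$ for every $J$ while $\max J_{alt}^{**}=j_k$. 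Consequently, Taylor's five patterns collapse into three: patterns~(i) and~(iv) of Theorem~\ref{theo.a.canonicaln} both yield our~(i), the latter because the common value of $\min$ makes $\Delta$ constant on $X\cup X_{alt}^{**}$; patterns~(iii) and~(v) both yield our~(iii), since once $\min$ is constantly $1$ the dependence on $(\min,\max)$ collapses to dependence on $\max=j_k$; and Taylor's pattern~(ii) yields our~(ii) once one checks that $J\mapsto J_{alt}^{**}$ is injective and that its image is disjoint from the family $\{\{1,\ldots,\ell\}:\ell\in\mathbb{N}\}$ encoding the $x_\ell$'s---both facts are immediate because, for $k\geq 3$, the set $J_{alt}^{**}$ carries at least one gap $\{j_1+1,\ldots,j_2\}$, and the boundaries of its gaps together with $\max J_{alt}^{**}$ determine $J$.

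For the necessity: a constant coloring forces pattern~(i), since $X$ cannot then be rainbow; the injective coloring $\Delta(n)=n$ forces~(ii), because under it pattern~(iii) would require $x_{j_1}-x_{j_2}+\cdots+x_{j_{k-2}}-x_{j_{k-1}}=0$ for every admissible $J$, whereas fixing $j_1,\ldots,j_{k-2}$ and varying $j_{k-1}$ would force $x_{j_{k-1}}$ to equal a single integer, which is absurd. To force~(iii), use $\Delta_1(x)=i$ iff $x\in[2^i,2^{i+1}-1]$; for any infinite $X\subseteq\mathbb{N}$, the $\Delta_1$-values on $X$ are unbounded, so pattern~(i) fails, and after thinning to a super-increasing subsequence $y_1<y_2<\cdots$ in $X$ with $y_{i+1}>100\,y_i$, for any fixed large $j_k$ the $\binom{j_k-1}{k-1}$ alternating sums $\sum^{**}_{j\in J} y_j$ with $\max J = j_k$ are pairwise distinct (by a greedy comparison at the largest coordinate where two index tuples differ) yet all lie in $[y_{j_k}(1-\tfrac{1}{50}),\,y_{j_k}(1+\tfrac{1}{50})]$, so they realise at most three $\Delta_1$-colors and pigeonhole yields two that share a color, ruling out pattern~(ii). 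The main obstacle is the combinatorial bookkeeping of the sufficiency step: verifying that Taylor's rainbow pattern~(ii) really translates into a rainbow coloring of the full union $X\cup X_{alt}^{**}$ (rather than separately on $X$ and on $X_{alt}^{**}$) requires precisely the injectivity of $J\mapsto J_{alt}^{**}$ and the non-overlap of its image with the initial intervals $\{1,\ldots,\ell\}$.
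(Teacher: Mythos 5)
Your proposal is correct and follows essentially the same route as the paper: the same binary encoding $x_j=\sum_{t\in B_1\cup\cdots\cup B_j}2^t$ reducing to Taylor's theorem for finite unions, the same set $J_{alt}$ with forced minimum $1$ collapsing patterns (iv) and (v) into (i) and (iii), and the same dyadic coloring $\Delta_1$ for the necessity of pattern (iii). The only differences are cosmetic: you make explicit the injectivity of $J\mapsto J_{alt}^{**}$ and the necessity of (ii) via the identity coloring, and you rule out the rainbow pattern by thinning to a super-increasing subsequence and pigeonholing, where the paper pigeonholes over the three specific sums $q,s,t$.
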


In view of the remarks directly below Theorem~\ref{ghl} we have the following consequence of Theorem~\ref{prop+1,-1}.

\begin{corollary} \label{cor+1,-1}
Let $k,r\geq 2$ be integers, where $k\geq 3$ is odd. Let $\Delta \colon {\mathbb N} \longrightarrow [r]$ be a
coloring. 
Then, one can find an infinite set X= \{$x_1 < x_2 < \cdots\}$ of positive integers, such that the set
$X \cup X_{alt}^{**}(k)$ is  monochromatic.
\end{corollary}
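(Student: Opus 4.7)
The plan is to deduce Corollary~\ref{cor+1,-1} as an immediate consequence of Theorem~\ref{prop+1,-1} by ruling out the two non-monochromatic canonical patterns under the assumption that only finitely many colors are used. Since $[r] \subseteq \mathbb{N}$, I regard the given coloring $\Delta\colon \mathbb{N} \to [r]$ as a coloring $\Delta\colon \mathbb{N} \to \mathbb{N}$ and apply Theorem~\ref{prop+1,-1} to obtain an infinite set $X = \{x_1 < x_2 < \cdots\}$ of positive integers such that the pair $(X, X_{alt}^{**}(k))$ realizes one of the three canonical patterns (i), (ii), or (iii) listed there.

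Next I eliminate patterns (ii) and (iii). Both of these stipulate that the set $X$ is rainbow under $\Delta$, i.e., that the values $\Delta(x_1), \Delta(x_2), \Delta(x_3), \ldots$ are pairwise distinct. Since $\Delta$ takes values in the finite set $[r]$, the pigeonhole principle forbids an infinite sequence of positive integers from carrying pairwise distinct colors; indeed, some color in $[r]$ must be assigned to infinitely many $x_i$'s. Consequently neither (ii) nor (iii) can hold, and the only remaining possibility is pattern (i), which asserts precisely that $X \cup X_{alt}^{**}(k)$ is monochromatic. This is the desired conclusion.

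There is essentially no combinatorial obstacle here, since Theorem~\ref{prop+1,-1} already does the heavy lifting. I would, however, include a remark to emphasize that this corollary is genuinely stronger than what case~(ii) of Theorem~\ref{ghl} yields directly for alternating $\pm 1$ sequences with $a_1 + a_2 + \cdots + a_k = 1$ and $k$ odd: Theorem~\ref{ghl}(ii) produces a monochromatic solution but allows the $x_i$'s to coincide, whereas the infinite set $X = \{x_1 < x_2 < \cdots\}$ produced by Theorem~\ref{prop+1,-1} is by construction strictly increasing, so the $x_i$'s are pairwise distinct. This matches the content of the paragraph directly following Theorem~\ref{ghl} referencing Corollary~\ref{cor+1,-1}.
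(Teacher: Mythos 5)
Your proof is correct and takes essentially the same route the paper intends: the corollary is stated as a direct consequence of Theorem~\ref{prop+1,-1}, precisely because patterns~(ii) and~(iii) there both require $X$ to be rainbow, which the pigeonhole principle rules out for an infinite set colored with only $r$ colors, leaving pattern~(i). Your closing remark on the pairwise distinctness of the $x_i$'s also matches the paper's discussion following Theorem~\ref{ghl}.
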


\begin{proof}[Proof of Theorem \ref{prop+1,-1}]
The proof is similar to the proof of Theorem~\ref{prop-1,+1}. Let $\Delta\colon {\mathbb N}  \longrightarrow {\mathbb N}$
 be an arbitrary coloring.

 Consider another coloring $\Delta'\colon \PP'  \longrightarrow {\mathbb N},$
defined  for any finite set $S\subseteq \mathbb{N}$ by 
$$
\Delta' ( S) = \Delta \left( \sum_{i\in S }  2^i \right).
$$

  By Theorem~\ref{theo.a.canonicaln} applied to a family ${\mathcal A}$, whose members all are single element subsets of $\mathbb{N}$, there exists a subfamily ${\mathcal B} =\{B_1 < B_2 < \cdots \}$ of ${\mathcal A}'$,  a family of  finite subsets of $\mathbb{N}$, such that one of the five canonical patterns~(i)-(v)  with respect to the coloring $\Delta'$ holds.  

Consider the integers
$$x_j = \sum_{t\in B_1 \cup B_2 \cup \cdots \cup B_j} 2^t, \,\, j=1,2,\ldots.$$ 
Let $X=\{x_1 < x_2 < \cdots \} \subseteq {\mathbb N}$ be an infinite set.
For a set $J= \{j_1 < j_2 < \cdots < j_k\}$, odd $k$, of positive integers,   let 

$$J_{alt} = \{1, 2, \ldots, j_1\}  \bigcup _{\ell =1}^{(k-1)/2}\bigcup_{{s}=1}^{j_{2\ell+1} - j_{2\ell}} \{{j_{2\ell} + {s}}\}, \mbox{  i.e., }$$
$$J_{alt} = \{ 1, 2, \ldots, j_1,   ~\ ~\ ~\, j_2+1, j_2 + 2, \ldots, j_3, ~\,~\,~\, j_4+1, j_4 +2, \ldots, j_5, ~\,~\,~\ldots,~~\,\,~ j_{k-1}+1, j_{k-1} + 2, \ldots, j_k\}$$
and let
$$B_{J_{alt}} = \bigcup_{i\in J_{alt}} B_i.$$

Then we have  
$$\sum^{**}_{j\in J} x_j = \sum_{ t \in  B_{J_{alt}}}  2^t.$$

In particular, we see that $\Delta$ on $X\cup X_{alt}^*$ corresponds to $\Delta'$ on specific finite unions of sets $B_i$'s:
$$\Delta'(B_1\cup B_2 \cup \cdots \cup B_j) = \Delta(x_j) ~  \mbox{   and   } ~ \Delta'( \bigcup_{i\in J_{alt}} B_i) =\Delta(\sum^{**}_{j\in J} x_j ).$$

First  we shall show that $X$ and $X_{alt}^{**}$ satisfy  the three canonical patterns described in the statement of the theorem.
\begin{itemize}
\item{} If we have pattern~(i)  of Theorem~\ref{theo.a.canonicaln} for $\Delta'$, i.e., the set of all finite unions of $B_i$'s is monochromatic, then 
 $X \cup X_{alt}^{**}$ is  monochromatic under $\Delta$.   Similarly, if we have pattern~(ii) of Theorem~\ref{theo.a.canonicaln} for $\Delta'$,  
then  $X \cup X_{alt}^{**}$ is  rainbow under $\Delta$.
\item{}If we have pattern~(iii) of Theorem~\ref{theo.a.canonicaln} for  $\Delta'$, then  $X$ is rainbow under $\Delta$, and
$ \Delta( \sum^{**}_{j\in J} x_j)=\Delta (x_{\max J}) $
for all $J\in \binom{\mathbb{N}}{k}$.
\item{} If we have pattern~(iv) of Theorem~\ref{theo.a.canonicaln} for  $\Delta'$, then  $X\cup X_{alt}^{**}$ is monochromatic, as $\min B_1= \min \cup_{\ell = 1}^{j_1} B_\ell \cup_{i=j_{k-1} + 1}^{j_{k}} B_{i}= \min \cup_{i=1}^{j} B_i$,
 for all $j \in {\mathbb N}$   and $J
 = \{j_1  < j_2 < \cdots < j_k \} \in \binom{\mathbb{N}}{k}$, so pattern~(iv) coincides with pattern~(i).
\item{}If we have pattern~(v) of Theorem~\ref{theo.a.canonicaln} for $\Delta'$,  then $X$ is rainbow under $\Delta$, and
$ \Delta( \sum^{**}_{j\in J} x_j)=\Delta (x_{\max J}) $
for all $J\in \binom{\mathbb{N}}{k}$,  so pattern~(v) coincides with pattern~(iii).
\end{itemize}
Thus one of the patterns~(i)-(iii) holds for $\Delta$.\\

Patterns~(i) and~(ii) are clearly necessary. 

Consider the coloring   $\Delta_1 \colon {\mathbb N} \longrightarrow {\mathbb N}$, where 
$\Delta_1(x) = i$ if and only if $x \in [2^i, 2^{i+1} - 1]$, for a non-negative integer $i$.
We recall some properties of $\Delta_1$:
\begin{itemize}
\item{} For  $\Delta_1$ pattern~(i) does not hold for $k \geq 3$.\\

\item{} We shall show next that for $\Delta_1$ pattern~(ii) does not hold for $k \geq 3$.
Let $Q, S,$ and $T$ be $k$-element sets of indices defined as follows:
\begin{eqnarray*}
Q& = & \{i_1, i_2, ~  i_5,i_6,  \ldots, i_{k+1}, i_{k+2}\}\\
S & = &  \{i_1, i_3, ~  i_5, i_6, \ldots, i_{k+1}, i_{k+2}\}\\
T & = & \{i_1, i_4, ~ i_5, i_6,\ldots, i_{k+1}, i_{k+2}\}.
\end{eqnarray*}
Let $q= \sum^{**} _{j\in Q} x_{j}$, $s= \sum^{**} _{j\in S} x_{j}$, and $t= \sum^{**} _{j\in T} x_{j}$.
 By considering indices $i_1, i_2, \ldots, i_{k+2}$ sufficiently far from each other, we can assume that  $x_{i_{k+2}}\geq 2^\ell$ for some  large $\ell$ and  $x_{i_j}< 2^{{\ell -1}}$, for $j=1,2, \ldots, k+1$.
Thus $q, s, t \geq 2^{\ell-1}$ and $|s-t|, |s-q|, |t-q|< 2^{\ell-1}$. If $\min\{q, s, t\} \in [2^{\ell'}, 2^{\ell'+1}-1]$ for some $\ell'\geq \ell-1$, 
then $q, t, s \in [2^{\ell'}, 2^{\ell'+1}-1+2^{\ell-1}]$, i.e, $\Delta_1(q), \Delta_1(t), \Delta_1(s) \in \{\ell', \ell'+1\}$. Thus two of $q,s,t$ have the same color and therefore $X_{alt}^{**}$ is not rainbow.   This in particular implies that pattern~(ii) does not hold for $\Delta_1$.
\end{itemize}
Therefore, pattern~(iii) can not be omitted for $k \geq 3$ without violating the theorem.
\end{proof}


\vskip 1cm

\section{The Set $\{x_1+x_2 + \cdots +x_k:  ~~  x_1, x_2, \ldots, x_k \in X, ~ x_1<x_2 < \cdots<x_k\}$}\label{x1+x2+xk}
If for a fixed $k$, we only consider $k$-term sums of integers, as a  consequence of Theorem~\ref{theo.a.canonical} we have the following, where only three canonical patterns show up.

\begin{theorem}\label{theo.xxx}
Let $k \geq 2$ be a fixed positive integer. For every coloring $\Delta\colon
{\mathbb N} \longrightarrow {\mathbb N}$ there exist infinitely 
many positive
integers $x_1 < x_2 < \cdots$, such that one of the following holds:
\begin{itemize} 
	\item[(i)] $\Delta (\sum_{i\in I } x_i) = \Delta (\sum_{j\in J } x_j)$, for all   $I, J \in \binom{\mathbb N}{k}$, or
	\item[ (ii)] $\Delta (\sum_{i\in I } x_i) = \Delta (\sum_{j\in J } x_j)$ if and only if $I=J$,
for all   $I, J  \in \binom{\mathbb N}{k}$, or
\item[ (iii)]  $\Delta (\sum_{i\in I } x_i) = \Delta (\sum_{j\in J } x_j)$ if and only if $\mbox{max } I = \mbox{max } J$,
for all   $I, J  \in \binom{\mathbb N}{k}$.
\end{itemize}
None of these three patterns may be omitted without violating the theorem.
\end {theorem}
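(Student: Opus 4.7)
The plan is to derive Theorem~\ref{theo.xxx} as a consequence of Taylor's Theorem~\ref{theo.a.canonical}. Given $\Delta\colon \mathbb{N} \to \mathbb{N}$, applying Taylor's theorem produces an infinite sequence $y_1 < y_2 < \cdots$ satisfying one of Taylor's five canonical patterns. If that pattern is~(i), (ii), or~(iii), then setting $x_i := y_i$ immediately yields our corresponding pattern on $k$-sums, since the global pattern on all finite sums restricts correctly to $k$-element index sets.

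For Taylor's pattern~(iv)---where $\Delta(\sum_{i\in I} y_i)$ depends only on $\min I$---I would use the anchoring construction $x_i := y_1 + y_{i+1}$. Since the index set $\{1, i+1\}$ has minimum $1$, pattern~(iv) forces $\Delta(x_i) = \Delta(y_1)$ for every~$i$, so the singletons of $\{x_i\}$ are monochromatic. Reapplying Taylor's theorem (in its relative form) to $\{x_i\}$, Taylor's patterns~(ii)--(v) each require distinct singleton colors and are therefore inconsistent; only Taylor's pattern~(i) remains possible, forcing some subsequence of $\{x_i\}$ to have every finite sum---in particular every $k$-sum---of the same color, yielding our pattern~(i).

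For Taylor's pattern~(v)---where $\Delta(\sum_{i\in I} y_i) = c(\min I, \max I)$ for a function $c$ injective on pairs---the same construction $x_i := y_1 + y_{i+1}$ now makes $\Delta(x_i) = c(1, i+1)$ injective in~$i$. Reapplying Taylor to $\{x_i\}$ excludes only Taylor's~(i) and leaves~(ii)--(v). Patterns~(ii) and~(iii) give our~(ii) and~(iii) directly, pattern~(iv) reduces to our~(i) via the previous paragraph, and pattern~(v) triggers iteration of the same anchoring construction on the newly constructed sequence. The iteration is meant to terminate after finitely many rounds, since at each level the induced coloring inherits further structural constraints from the compound dependencies on $y_1, y_2, \ldots$, so one of the terminating cases must eventually be reached.

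For the necessity of the three patterns, three explicit colorings suffice. The constant coloring $\Delta \equiv 1$ realizes only~(i); the identity $\Delta(n) = n$ realizes only~(ii), since distinct $k$-subsets of any infinite sequence produce distinct sums; and $\Delta(n) = \lfloor \log_2 n \rfloor$ combined with a sparse sequence such as $x_i = 2^{2^i}$ realizes only~(iii), because the color of any $k$-sum is then controlled by its largest summand, while the unbounded range of~$\Delta$ and the coincidences of logarithmic classes across $k$-subsets that share the largest summand rule out~(i) and~(ii) respectively. The main technical obstacle is the rigorous justification of the termination of the iteration in case~(v); this requires a careful structural analysis of how Taylor's pattern~(v) propagates through successive applications of the anchoring construction.
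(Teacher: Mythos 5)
Your reduction to Taylor's Theorem~\ref{theo.a.canonical} is the right starting point, and your treatment of Taylor's patterns (i)--(iii) and of the necessity of the three patterns is essentially sound (modulo small slips: distinct $k$-subsets of an arbitrary infinite sequence need not have distinct sums, but for the identity coloring you only need that patterns (i) and (iii) fail for every sequence, which they do; and the pigeonhole count showing that the logarithmic coloring defeats pattern (ii) for \emph{every} infinite sequence, not just a sparse one, should be made explicit). The genuine gap is in your handling of Taylor's patterns (iv) and (v). If pattern (iv) holds for $y_1<y_2<\cdots$ and you set $x_i=y_1+y_{i+1}$, then indeed every $x_i$ gets the color $\Delta(y_1)$, but a $k$-term sum $\sum_{i\in S}x_i=k\,y_1+\sum_{i\in S}y_{i+1}$ contains $y_1$ with multiplicity $k\geq 2$; it is not a sum of distinct $y_j$'s, so pattern (iv) says nothing about its color. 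Your rescue---reapplying Taylor ``in its relative form'' to $\{x_i\}$ and arguing that patterns (ii)--(v) are inconsistent with monochromatic singletons---does not work: the relative (finite-unions/Milliken--Taylor) form returns a sequence $w_1<w_2<\cdots$ whose members are finite \emph{sums} of the $x_i$'s, and these again involve $y_1$ with multiplicity at least $2$, so their colors are uncontrolled and the $w_j$ need not be monochromatic; if instead you mean the plain Theorem~\ref{theo.a.canonical}, the new sequence need not lie inside $\{x_i\}$ at all. The same objection applies in case (v), where in addition you openly leave the termination of your iteration unproved; as described there is no decreasing quantity, so that step is not a proof.

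The missing idea (and the paper's fix) is to split the anchor among the $k$ summands \emph{before} applying Taylor: apply Theorem~\ref{theo.a.canonical} to the auxiliary coloring $\Delta'(x)=\Delta(kx)$, obtaining $y_0<y_1<\cdots$, and set $x_i=y_0+ky_i$. Then every $k$-term sum satisfies $\sum_{i\in S}x_i=k\bigl(y_0+\sum_{i\in S}y_i\bigr)$, so its $\Delta$-color equals the $\Delta'$-color of the genuine finite sum $y_0+\sum_{i\in S}y_i$ of distinct $y_j$'s, whose index set always has minimum $0$ and maximum $\max S$. Consequently Taylor's pattern (iv) collapses to your pattern (i) and pattern (v) collapses to pattern (iii) in a single step, with no second application of Taylor and no iteration. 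Until cases (iv) and (v) are repaired along these (or equivalent) lines, the sufficiency half of your argument is incomplete.
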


\begin{proof}

 Let  $\Delta\colon {\mathbb N} \longrightarrow {\mathbb N}$ be given. 
 Let $\Delta'\colon {\mathbb N} \longrightarrow {\mathbb N}$ be defined as $\Delta'(x) = \Delta(kx)$, $x \in {\mathbb N}$.
 By Theorem~\ref{theo.a.canonical}  applied to $\Delta'$, there exist infinitely 
many positive integers $y_0 < y_1 < \cdots$ such that one of the five patterns there applies. This implies that one of the five patterns applies to $ky_0< ky_1< \cdots $ under $\Delta$.  Let $z_i = ky_i$, $i=1, 2, \ldots$.

Set $x_i = z_0/k + z_i$, for all $i \in {\mathbb N}$. Then for a $k$-element set $S \subset{\mathbb N}$ we have  $\sum_{i \in S} x_i = z_ 0 + \sum_{i \in S} z_i$. If we have pattern~(i) of Theorem~\ref{theo.a.canonical}, then all these $(k+1)$-term sums have the same color. If we are in pattern~(ii) of Theorem~\ref{theo.a.canonical},
then all these sums are rainbow. If we have pattern~(iii) of Theorem~\ref{theo.a.canonical},
then all these sums are maximum-colored. If we have pattern~(iv) of Theorem~\ref{theo.a.canonical},
then, as  $\sum_{i \in S} x_i = z_ 0 + \sum_{i \in S} z_i$, all these sums  have the same color.
If we have pattern~(v) of Theorem~\ref{theo.a.canonical},
then again, as  $\sum_{i \in S} x_i = z_ 0 + \sum_{i \in S} z_i$, all these sums are   maximum-colored.
Thus  the integers $x_1<x_2< \ldots $ satisfy one of the patterns (i)-(iii).\\

The necessity  of the  three patterns in Theorem~\ref{theo.xxx} 
follows as before. Patterns~(i) and (ii) must be there. To  the last pattern~(iii) one applies the coloring $\Delta\colon {\mathbb N}  \longrightarrow {\mathbb N}$, where $\Delta (x) = \ell$ if and only $x \in [k^{\ell -1}, k^{\ell } -1 ]$, for positive integers $\ell$. Let $X= \{ x_1 < x_2 < \cdots \}$ be an infinite set of positive integers. Clearly, we do not have  pattern~(i), as $X$ is infinite. Moreover, all $k$-term sums cannot be 
rainbow. Namely, 
by thinning, we can assume that all $k$-term sums are pairwise distinct.  If $x_m \in [k^{\ell -1}, k^{\ell } -1 ]$ and $m \geq {\max \{4,k+1\}}$, then for all $k$-term sums $\sum_{i \in I} x_i$ with  $\mbox{max } I = m$, it is
$ k^{\ell -1} < \sum_{i \in I} x_i < k^{\ell + 1} - 1$, so each such sum has one out of at most two possible colors, but there are $\binom{m-1}{k-1} \geq 3$ possible sums. This shows that all  three patterns are needed.
\end{proof}

\vskip 1cm
\section{The set $\{ x_k-x_{k-1} + x_{k-2}- x_{k-3}+ \cdots +x_2-x_1: ~ x_1, x_2, \ldots, x_k \in X, ~ x_1<x_2< \cdots < x_k\}$, \\ $k$ even }\label{section_even,ell}

We recall the following notation of an alternating sum for a $k$-element set $J=\{j_1<j_2< \cdots <j_k\}$ and positive integers $x_j$, $j\in J$:
$$\sum^*_{j\in J} x_j =   x_{j_k} - x_{j_{k-1}} +x_{j_{k-2}} - x_{j_{k-3}} + \cdots + x_{j_2} - x_{j_1}.$$

\begin{theorem} \label{prop-1,+1,ell}
Let $k\geq 2$ be an even integer. Let $\Delta \colon {\mathbb N} \longrightarrow {\mathbb N}$ be an arbitrary 
coloring. 
Then, one can find an infinite set $X= \{ x_1 < x_2 < \cdots\}$ of positive integers, such that the set
$X_{alt}^*= X_{alt}^*(k)=\{  \sum^*_{j\in J}x_j :  J\in \binom{\mathbb{N}}{k}\}$ is  colored according to one of the following patterns.
\begin{itemize}
\item[(i)]  $X_{alt}^*$ is  monochromatic, or
\item[(ii)]  $X_{alt}^* $  is rainbow, or
\item[(iii)]   $\Delta(\sum^*_{i\in I } x_i)= \Delta(\sum^*_{j\in J} x_j)$ if and only if $\max I = \max J$, for all $I,J\in \binom{\mathbb N}{k}$, or 
\item[(iv)]   $\Delta( \sum^*_{i\in I} x_i)= \Delta( \sum^*_{j\in J} x_j)$ if and only if $\min I= \min J$, for all $I,J\in \binom{\mathbb{N}}{k}$,  or
 \item[(v)]   $\Delta(\sum^*_{i\in I } x_i)= \Delta (\sum^*_{j\in J} x_j)$ if and only if $\min I=\min J$ and $\max I=\max J$,  for all $I,J\in \binom{\mathbb N}{k}$.
\end{itemize}
{For $k \geq 4$ none of the five patterns may be omitted without violoating the theorem. For $k=2$ patterns~(ii) and~(v) coincide. In the latter case, none of the four patterns~(i)-(iv) may be omitted without violating the theorem.}
\end{theorem}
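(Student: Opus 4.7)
The plan is to derive the existence part of the theorem directly from Theorem~\ref{prop-1,+1} as a black box, and to derive the necessity part from Lemma~\ref{unavoidable-i-v} together with the trivial monochromatic and rainbow colorings. Since this theorem drops the requirement that $X$ itself be colored canonically (keeping only conditions on $X_{alt}^*$), the existence claim is logically weaker and should follow immediately from what has been proved.

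For existence, given an arbitrary coloring $\Delta\colon {\mathbb N}\to{\mathbb N}$, I would apply Theorem~\ref{prop-1,+1} to obtain an infinite set $X=\{x_1<x_2<\cdots\}$ falling into one of the five patterns~(i)--(v) of that theorem. Each of those patterns restricts to the corresponding pattern here on $X_{alt}^*$: patterns~(i) and~(ii) of Theorem~\ref{prop-1,+1} restrict to patterns~(i) and~(ii) here; pattern~(iii) of Theorem~\ref{prop-1,+1} (where $X$ is rainbow and $\Delta(\sum^*_{j\in J}x_j)=\Delta(x_{\max J})$) restricts to pattern~(iii) here, because two alternating sums then share a color exactly when their indexing sets share a maximum; pattern~(iv) restricts to pattern~(iv) here; and pattern~(v) restricts to pattern~(v) here. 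Alternatively, one could repeat the set-theoretic reduction from the proof of Theorem~\ref{prop-1,+1}, namely apply Taylor's Theorem~\ref{theo.a.canonicaln} to the auxiliary coloring $\Delta'(S)=\Delta(\sum_{i\in S}2^i)$, set $x_j=\sum_{t\in B_1\cup\cdots\cup B_j}2^t$, and use $\sum^*_{j\in J}x_j=\sum_{t\in B_{J_{alt}}}2^t$ to translate the five canonical patterns of Theorem~\ref{theo.a.canonicaln} into patterns~(i)--(v) here.

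For necessity, patterns~(i) and~(ii) must appear by considering a monochromatic and a rainbow coloring of~${\mathbb N}$, respectively. For the remaining patterns I would reuse the three colorings $\Delta_1$, $\Delta_2$, $\Delta_3$ from Section~\ref{section_even} together with Lemma~\ref{unavoidable-i-v}. The lemma states that for every infinite $X\subseteq\mathbb{N}$, after thinning, $\Delta_1$ rules out patterns~(i\p), (ii\p), (iv\p), (v\p) and thus forces~(iii) to be present in the list; $\Delta_2$ rules out (i\p), (ii\p), (iii\p), (v\p) and thus forces~(iv); and for $k\geq 4$ the coloring $\Delta_3$ rules out (i\p), (ii\p), (iii\p), (iv\p) and thus forces~(v). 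Hence for $k\geq 4$ none of the five patterns may be omitted. For $k=2$ the patterns~(ii) and~(v) coincide (the pair $(\min J,\max J)$ determines $J$ completely), and the colorings for (i), (ii), (iii), (iv) remain valid, confirming necessity in that case.

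The main obstacle is essentially absent, since the combinatorial heart of the argument — the verification that $\Delta_1,\Delta_2,\Delta_3$ behave as claimed on arbitrary infinite subsequences — is already packaged into Lemma~\ref{unavoidable-i-v}. The only subtleties to check are book-keeping ones: that the implication between patterns of Theorem~\ref{prop-1,+1} and the $X_{alt}^*$-only patterns here is exactly as described, and that the degenerate case $k=2$ collapses pattern~(v) into pattern~(ii) without disturbing the necessity of the remaining four patterns.
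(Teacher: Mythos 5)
Your proposal is correct and follows essentially the same route as the paper: the existence of one of the five patterns is read off from Theorem~\ref{prop-1,+1} (whose patterns restrict exactly to the $X_{alt}^*$-only patterns), patterns~(i) and~(ii) are forced by the monochromatic and rainbow colorings, and the necessity of~(iii)--(v) is exactly what Lemma~\ref{unavoidable-i-v} packages via $\Delta_1$, $\Delta_2$, $\Delta_3$, including the $k=2$ collapse of~(v) into~(ii).
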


\begin{proof}
The unavoidability of one of the  five patterns follow from  Theorem~\ref{prop-1,+1}. Clearly  patterns~(i) and~(ii) are necessary by considering a monochromatic and a rainbow coloring of $\mathbb{N}$, respectively.\\
The necessity of the patterns (iii)-(v) follows from Lemma \ref{unavoidable-i-v}.  
\end{proof}

\vskip 1cm
\section{The Set $ \{ x_{k} - x_{k-1} + \cdots + x_{3} - x_{2} + x_{1}: x_1, x_2, \ldots, x_k \in X,   x_{1}< x_{2} < \cdots < x_{k} \}$, \\ $k\geq 3$ odd} \label{alternating-odd}

Recall the following notation for a $k$-element set $J=\{j_1<j_2< \cdots <j_k\}$ with $k$ odd and positive integers $x_j$, $j\in J$:
$$\sum^{**}_{j\in J} x_j =   x_{j_k} - x_{j_{k-1}} +x_{j_{k-2}} - x_{j_{k-3}} + \cdots + x_{j_3} - x_{j_2} + x_{j_1}.$$

\begin{theorem} \label{prop+1,-1,ell}
Let  $k \geq 3$ be an odd integer. For any coloring $\Delta \colon {\mathbb N} \longrightarrow {\mathbb N}$ there exists an infinite set $X=\{x_1 < x_2 <\cdots\}$ of positive integers
such that the set
$X_{alt}^{**}= X_{alt}^{**}(k)=\{  \sum^{**}_{j\in J}x_j :  J\in \binom{\mathbb{N}}{k}\}$ is  colored according to one of the following patterns: 
\begin{itemize} 
\item[(i)] $X_{alt}^{**}$ is monochromatic, or
\item[(ii)]  $ X_{alt}^{**}$  is rainbow, or 
\item[(iii)]  $\Delta(\sum^{**}_{i\in I } x_i)=\Delta(\sum^{**}_{j\in J } x_j)$ if and only if $\max I= \max J$,  for all $I, J\in \binom{\mathbb N}{k}$.
\end{itemize}
Moreover, none of the patterns could be omitted.
\end{theorem}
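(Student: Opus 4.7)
\medskip

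The plan is to derive this statement directly from Theorem~\ref{prop+1,-1}, whose conclusion concerns the richer set $X \cup X_{alt}^{**}$. First I would invoke Theorem~\ref{prop+1,-1} on the given coloring $\Delta$ to obtain an infinite set $X = \{x_1 < x_2 < \cdots\}$ satisfying one of the three patterns listed there. If we are in pattern~(i) or~(ii) of Theorem~\ref{prop+1,-1}, the restriction to $X_{alt}^{**}$ immediately yields pattern~(i) or~(ii) here. If we are in pattern~(iii) of Theorem~\ref{prop+1,-1}, then $X$ is rainbow and $\Delta(\sum^{**}_{j\in J} x_j) = \Delta(x_{\max J})$ for every $J \in \binom{\mathbb N}{k}$; since $X$ being rainbow means $\Delta(x_{\max I}) = \Delta(x_{\max J})$ precisely when $\max I = \max J$, this is exactly pattern~(iii) of the current theorem.

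For the necessity part, patterns~(i) and~(ii) are forced by the monochromatic and rainbow colorings of ${\mathbb N}$ respectively, so the only remaining task is to exhibit a coloring which does not admit either pattern~(i) or~(ii) on any infinite~$X$, thereby forcing~(iii). I would reuse the coloring $\Delta_1 \colon {\mathbb N} \longrightarrow {\mathbb N}$ with $\Delta_1(x) = i$ if and only if $x \in [2^i, 2^{i+1}-1]$, which already appeared in the proof of Theorem~\ref{prop+1,-1}. Pattern~(i) is ruled out because for any infinite $X$ the alternating sums $\sum^{**}_{j \in J} x_j$ grow without bound and therefore hit infinitely many dyadic intervals. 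To rule out pattern~(ii) one copies the $Q, S, T$ argument used in the proof of Theorem~\ref{prop+1,-1}: picking indices $i_1 < i_2 < \cdots < i_{k+2}$ in $X$ with $x_{i_{k+2}} \geq 2^{\ell}$ sufficiently large and $x_{i_j} < 2^{\ell-1}$ for $j \leq k+1$, the three alternating sums $q, s, t$ all lie in an interval of length $2^{\ell-1}$ above $2^{\ell-1}$, so they occupy at most two consecutive dyadic blocks, and at least two of them share a color. Hence $X_{alt}^{**}$ cannot be rainbow, so neither~(i) nor~(ii) can hold for $\Delta_1$, and the theorem forces pattern~(iii) to be present in the list.

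Since the existence part is a direct corollary of Theorem~\ref{prop+1,-1} and the necessity part just transports the $\Delta_1$ argument from the proof of Theorem~\ref{prop+1,-1}, there is no real obstacle here; the only minor care is in observing that pattern~(iii) of Theorem~\ref{prop+1,-1}, stated in terms of the value $\Delta(x_{\max J})$, translates faithfully to the equivalence-relation formulation in pattern~(iii) of the current theorem by virtue of $X$ being rainbow.
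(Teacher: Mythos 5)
Your proposal is correct and follows essentially the same route as the paper: the existence part is deduced from Theorem~\ref{prop+1,-1} (with the same straightforward translation of its pattern~(iii) into the equivalence-relation form, using that $X$ is rainbow), and the necessity of pattern~(iii) is shown with the same dyadic-interval coloring $\Delta_1$ and the same $Q,S,T$ three-sums argument. No gaps to report.
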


\begin{proof}
The unavoidability of one  of the patterns (i)-(iii) follows from Theorem \ref{prop+1,-1}.

Patterns~(i) and~(ii) in the theorem are clearly necessary.  Next we show that the pattern (iii) is necessary as well. 

Let $X = \{x_1 < x_2 < \cdots \}$ be an infinite set of positive integers.
Let $\Delta_1 \colon {\mathbb N} \longrightarrow {\mathbb N}$, where 
$\Delta_1(x) = i$ if and only if $x \in [2^{i-1}, 2^{i} - 1]$, for a positive integer $i$.
We recall some properties of $\Delta_1$:
\begin{itemize}
\item{} For  $\Delta_1$ pattern~(i) does not hold for $k\geq 3$.\\

\item{} For $\Delta_1$ pattern~(ii) does not hold for $k \geq 3$.
Let $Q, S,$ and $T$ be $k$-element sets of indices defined as follows:
\begin{eqnarray*}
Q& = & \{i_1, i_2, ~  i_5,  \ldots, i_{k+1}, i_{k+2}\}\\
S & = &  \{i_1, i_3, ~  i_5, \ldots, i_{k+1}, i_{k+2}\}\\
T & = & \{i_1, i_4, ~ i_5, \ldots, i_{k+1}, i_{k+2}\}.
\end{eqnarray*}
Let $q= \sum^{**} _{j\in Q} x_{j}$, $s= \sum^{**} _{j\in S} x_{j}$, and $t= \sum^{**} _{j\in T} x_{j}$.
 By considering indices $i_1, i_2,\ldots, i_{k+2}$ sufficiently far from each other, we can assume that  $x_{i_{k+2}}\geq 2^\ell$ for some  large $\ell$ and  $x_{i_j}< 2^{{\ell -1}}$, for $j=1,2, \ldots, k+1$.
Thus $q, s, t \geq 2^{\ell-1}$ and $|s-t|, |s-q|, |t-q|< 2^{\ell-1}$. If $\min\{q, s, t\} \in [2^{\ell'}, 2^{\ell'+1}-1]$ for some $\ell'\geq \ell-1$, 
then $q, t, s \in [2^{\ell'}, 2^{\ell'+1}-1+2^{\ell-1}]$, i.e, $\Delta_1(q), \Delta_1(t), \Delta_1(s) \in \{\ell', \ell'+1\}$. Thus two of $q,s,t$ have the same color and therefore $X_{alt}^{**}$ is not rainbow.   This in particular implies that pattern~(ii) does not hold.
\end{itemize}
Therefore, pattern~(iii) may not be omitted for $k \geq 3$ without violating the theorem.
\end{proof}


\vskip 1cm

\section{Concluding remarks} \label{conclusions}

In this paper, we derived canonical Ramsey-type theorems for linear combinations of integers corresponding to sums or alternating sums and respective linear systems. Our results present sets of three or five  unavoidable patterns. In addition we show that each of these patterns is necessary.  It remains a wide question whether analogous statements could be made for general linear combinations. \\

There is a canonical version of the Milliken-Taylor Theorem  for colorings of $k$-element sets of sets, see ~ Lefmann ~\cite{lef8}. For the case of $k=2$ there are $26$ canonical patterns. Using this, we can show for example, that for sequences $(a,a, \ldots , a, 1,1, \ldots 1)$ containing $g$ many $a$'s and $k-g$ many ones, where $g(a-1) + k=0$ or $g(a-1) + k = 1$ and an integer $a < 0$, the set  $X \cup \{ ax_{i_1} +ax_{i_2} + \cdots +ax_{i_g} + x_{i_{g+1}} + x_{i_{g+2}} +\cdots + x_{i_{k}}: ~ 1 \leq i_1 < i_2 < \cdots < i_k  \}$, where $X=\{ x_1 < x_2 < \cdots \}$ is infinite, 
 is colored according to one of at most $26$ canonical patterns. Indeed, for the case $g(a-1) + k=1$ the number of patterns reduces to at most $15$.\\

However, we can only show necessity of at least three patterns for the  statements above.
This brings us to the following:\\

{\bf Open question:}~ For which sequences  $(a_1, a_2,\ldots, a_k)$ of  non-zero integers  is there a constant number, independent of $k$,  of unavoidable patterns so that in any coloring of  positive integers, there is an infinite set $X=\{x_1<x_2<\cdots\}$ such that 
$X \cup \{ a_1x_{i_1} +a_2x_{i_2} + \cdots + a_k x_{i_{k}}: ~ 1 \leq i_1 < i_2 < \cdots < i_k \}$ satisfies one of these patterns?
For which such sequences $(a_1, a_2,\ldots, a_k)$ can we guarantee five unavoidable patterns?\\

 We see that if $a_k<0$ then by looking at $x_{i_k}$ large enough compared to $x_{i_{k-1}}$, the sum  $a_1x_{i_1} +a_2x_{i_2} + \cdots + a_k x_{i_{k}}$ is negative. However since we consider only positive integers this case is ill-defined. From the remarks after 
   Theorem~\ref{ghl} we see that for situations with $a_k>1$,  we cannot guarantee the monochromatic pattern 
   for colorings with only finitely many colors.   These and some other indications make us believe that the problem is reducible to $a_k=1$ and that in case of  
 $\sum_{i=1}^k a_i = 0$ or  $\sum_{i=1}^k a_i = 1$  there are five unavoidable patterns.

\end{document}